\documentclass[reqno,12pt]{amsart}
\usepackage{amssymb,amsthm,amsmath,amsfonts,amstext,mathrsfs,url}
\usepackage{latexsym}
\usepackage{fancyhdr}
\usepackage{helvet}

\textwidth 16.6 true cm \oddsidemargin -0.2 true cm \evensidemargin -0.2
 true cm \textheight 22.8 true cm \topmargin -0.4 true cm

\newtheorem{thm}{Theorem}

\newtheorem*{prob}{Problem}
\newtheorem{prop}{Proposition}
\newtheorem{defin}{Definition}

\input xy
\xyoption{all}

\newcommand{\m}{\mathbf}

\def\d{\,{\rm{d}}}
\def\v{\,{\varsigma}}
\def\ll{\mathbf{l}}

\title[Commutative rational vector fields]
{Planar $2$-homogeneous\\commutative rational vector fields}
\author[G. Alkauskas]{Giedrius Alkauskas}
\address{Vilnius University, Institute of Computer Science, Naugarduko 24, LT-03225 Vilnius, Lithuania}
\email{giedrius.alkauskas@mif.vu.lt}

\begin{document}
\begin{abstract} In this paper we prove the following result: if two $2$-dimensional $2$-homogeneous rational vector fields commute, then either both vector fields can be explicitly integrated to produce rational flows with orbits being lines through the origin, or both flows can be explicitly integrated in terms of algebraic functions. In the latter case, orbits of each flow are given in terms of $1$-homogeneous rational functions $\mathscr{W}$ as curves $\mathscr{W}(x,y)=\textrm{const}$. An exhaustive method to construct such commuting algebraic flows is presented. The degree of the so-obtained algebraic functions in two variables can be arbitrarily high. 
\end{abstract}

\pagestyle{fancy}
\fancyhead{}
\fancyhead[LE]{{\sc Commutative projective flows}}
\fancyhead[RO]{{\sc G. Alkauskas}}
\fancyhead[CE,CO]{\thepage}
\fancyfoot{}

\date{\today}
\subjclass[2010]{Primary 34A30, 37C10, 14H05. Secondary 35F05, 14E07}
\keywords{Translation equation, flow, rational vector fields, linear ODEs, autonomous non-linear ODEs, first order linear PDEs, algebraic functions, Lie bracket, commuting flows, Cremona groups, Wr\'{o}nskian}
\thanks{The research of the author was supported by the Research Council of Lithuania grant No. MIP-072/2015}

\maketitle

\section{Introduction} In this paper, by ``smooth", we mean of class $C^{\infty}$ in $\mathbb{R}^{n}$, in domains where rational functions in consideration are defined. Since we are mainly interested in rational vector fields, these will produce (possibly, ramified) flows in all $\mathbb{R}^{n}$. If a plane vector field $\varpi\frac{\partial}{\partial x}+\varrho\frac{\partial}{\partial y}$ is not used as a derivation on the space of algebra of $C^{\infty}$ germs and is given in cartesian coordinates (the second condition will always be satisfied), we will write a vector field as $(\varpi,\varrho)$.

\subsection{Motivation}
\label{sub-motiv}
 Let $F(\m{x},t):\mathbb{R}^{n}\times\mathbb{R}\mapsto\mathbb{R}^{n}$ be a flow with any smooth vector field. Then it satisfies the \emph{translation equation}, given by \cite{aczel, conlon, gadea, nikolaev}
\begin{eqnarray*}
F(F(\m{x},z),w)=F(\m{x},z+w),\quad \m{x}\in\mathbb{R}^{n},\quad z,w\in\mathbb{R}\text{ are small enough}.
\end{eqnarray*} 
One should make a distinction between \emph{local} and \emph{global} flows \cite{conlon}.\\
 
 In books on differential geometry, calculus, differential equations and vector fields (like \cite{conlon, gadea, krasno, nikolaev}) one usually considers various vector fields in $\mathbb{R}^{n}$, given, for example, by holomorphic or meromorphic functions. A special case of interest, with a strong algebraic and geometric emphasis, is to consider polynomial vector fields, or even homogeneous polynomial vector fields; for example, \cite{camacho}. A closely related topic is that of \emph{algebraic differential equations}.\\
 
  Let us now not limit to the polynomial case, but rather investigate rational functions as coordinates of vector fields. This introduces methods from birational geometry into the subject - for example, Cremona groups \cite{hudson}. This has a strong algebro-geometric flavour. Indeed, if $\ell$ is a birational transformation of $\mathbb{R}^{n}$, then the function
\begin{eqnarray*}
F^{\ell}(\m{x},t)=\ell^{-1}\circ F(\ell(\m{x}),t)
\end{eqnarray*} 
is also a flow with a rational vector field, which can be directly calculated from $\ell$ and the original vector field. \\

Suppose now, we wish to investigate which $n$-dimensional rational vector fields produce flows $F$, which are:
\begin{itemize}
\item[i)]rational;
\item[ii)]algebraic;
\item[iii)]flows with exactly $i$, $1\leq i\leq n-2$, independent rational first integrals;
\item[iv)]flows whose orbits are algebraic curves ($n-1$ independent rational first integrals - see \cite{chav} for planar polynomial case, and \cite{schlomuk} for the exposition in planar case also);
\item[v)]unramified flows (see \cite{alkauskas-un},  and also (\cite{alkauskas-super1}, Section 2.1) for the precise definition of the term \emph{unramified flow}).
\end{itemize}
Then exactly the same property is shared by $F^{\ell}$. Thus, we may wish to classify such flows up to birational equivalence. This is a new way of looking at flows with a rational vector field which have an intrinsic arithmetic structure, as itemized above.\\

We could worry also, for example, not about rational first integrals, but elementary first integrals, or \emph{Liouvillian} ones, which are functions that are built up from rational functions using exponentiation, integration, and algebraic functions. For $2$-dimensional polynomial vector fields, these questions are investigated in \cite{christopher,prelle,singer}.\\

One can also consider $1$-dimensional complex case and rational vector fields. For example, a differential equation $\dot{z}=R(z)$ is studied in \cite{benzinger}.

\subsection{$2$-homogeneous case}Now, instead of looking at general rational vector fields, in a series of papers \cite{alkauskas,alkauskas-2,alkauskas-un,alkauskas-ab}, and also in  \cite{alkauskas-super1,alkauskas-super2,alkauskas-super3, alkauskas-super4} (though more general vector fields are dealt with in the last four papers) we embarked on the task to develop the above program in a special case when a vector field is given by a collection of $2$-homogeneous rational functions.\\

 Indeed, suppose the flow which integrates a fixed $n$-dimensional rational vector field, is given by the function $F(\m{x},z)$, $\m{x}\in\mathbb{R}^{n}$, $z\in\mathbb{R}$. But now, if the vector field is $2$-homogeneous, the time variable can be accommodated within space variables, in a sense that there exists a function $\phi:\mathbb{R}^{n}\mapsto\mathbb{R}^{n}$, such that \cite{alkauskas}
\begin{eqnarray*}
F(\m{x},z)=\frac{\phi(\m{x}z)}{z},\quad \m{x}\in\mathbb{R}^{n},\quad z\in\mathbb{R}.
\end{eqnarray*}

In this case, instead of the translation equation, we have \emph{the projective translation equation}, which was first introduced in \cite{alkauskas-t} and which is the equation of the form
\begin{eqnarray}
\frac{1}{z+w}\,\phi\big{(}\m{x}(z+w)\big{)}=\frac{1}{w}\,\phi\Big{(}\phi(\m{x}z)\frac{w}{z}\Big{)},\quad w,z\in\mathbb{R}.
\label{funk}
\end{eqnarray}
Generally speaking, one can even confine to the case $w=1-z$ without altering the set of solutions, though some complications arise concerning ramifications (when dealing with global flows). In a $2$-dimensional case, $\phi(x,y)=(u(x,y),v(x,y))$ is a pair of functions in two real (or complex) variables. A non-singular solution of this equation is called \emph{a projective flow}. Let $\phi^{z}(\m{x})=z^{-1}\phi(\m{x}z)$. The \emph{non-singularity} means that a flow satisfies the boundary condition
\begin{eqnarray}
\lim\limits_{z\rightarrow 0}\phi^{z}(\m{x})=\m{x}.
\label{init}
\end{eqnarray}

Then a \emph{vector field} is given by
\begin{eqnarray}
\big{(}\varpi(x,y),\varrho(x,y)\big{)}=\frac{\d}{\d z}\frac{\phi(xz,yz)}{z}\bigg{|}_{z=0},
\label{vec}
\end{eqnarray}
which, for projective flows, is necessarily a pair of $2$-homogeneous functions. For smooth functions, the functional equation (\ref{funk}) and the non-singularity condition imply a linear first order PDE \cite{alkauskas}
\begin{eqnarray}
u_{x}(x,y)(\varpi(x,y)-x)+
u_{y}(x,y)(\varrho(x,y)-y)=-u(x,y),\label{pde}
\end{eqnarray}
and the same PDE for $v$, with boundary conditions as given by (\ref{init}):
\begin{eqnarray}
\lim\limits_{z\rightarrow 0}\frac{u(xz,yz)}{z}=x,\quad
\lim\limits_{z\rightarrow 0}\frac{v(xz,yz)}{z}=y.
\label{bound}
\end{eqnarray}
These two PDEs (\ref{pde}) with the above boundary conditions are equivalent to (\ref{funk}) for $z,w$ small enough \cite{alkauskas}.\\

 Alternatively, the whole flow can be described in terms of the system of autonomous ODEs
\begin{eqnarray*}
\left\{\begin{array}{l}
x'(t)=\varpi\big{(}x(t),y(t)\big{)},\\
y'(t)=\varrho\big{(}x(t),y(t)\big{)}.
\end{array}
\right.
\end{eqnarray*}

Each point under a flow $\phi$ possesses the orbit, which is defined by
\begin{eqnarray*}
\mathscr{V}(\m{x})=\bigcup\limits_{z\in\mathbb{R}}\phi^{z}(\m{x})=F(\m{x},\mathbb{R}).
\end{eqnarray*}

The orbits of the flow with the vector field $(\varpi,\varrho)$ are given by $\mathscr{W}(x,y)=\mathrm{const}.$, where the function $\mathscr{W}$ can be found from the differential equation
\begin{eqnarray}
\mathscr{W}(x,y)\varrho(x,y)+\mathscr{W}_{x}(x,y)[y\varpi(x,y)-x\varrho(x,y)]=0,
\label{orbits}
\end{eqnarray}
and $\mathscr{W}$ is uniquely defined from this ODE and the condition that it is a $1$-homogeneous function. \\

Now, recall the next two definitions from \cite{alkauskas-ab}: 
\begin{defin}If there exists a positive integer $N$ such that $\mathscr{W}^{N}(x,y)$ is a rational function (then necessarily $N$-homogeneous), such a smallest positive $N$ is called \emph{the level of the flow}, and the flow itself is called \emph{an abelian flow of level} N. 
\end{defin}
The exception is when $x\varrho-y\varpi=0$ (since then the ODE (\ref{orbits}) is void), in which case a flow is called \emph{level $0$ flow}. For rational vector fields, such a flow of level $0$ is rational, too.
\begin{defin}We call the flow $\phi$ \emph{an algebraic projective flow}, if its vector field is rational, and $(u,v)$ is a pair of algebraic functions.
\end{defin}
Algebraic flow is necessarily an abelian flow of a certain level $N$.\\

We note that, among numerous motivations to investigate $2$-homogeneous vector fields separately, one is completely apt in the setting of the current paper; see Theorem \ref{thm-main}. Namely, that the property of the flow being \emph{algebraic} is unambiguous only if a vector field is $2$-homogeneous. See (Note 3 in \cite{alkauskas-super1}) for an example where it is shown that if a vector field is not such, then there are two possibilities what to call \emph{an algebraic flow}.
%The theory of projective flows with rational vector fields is developed %in \cite{alkauskas,alkauskas-2,alkauskas-un,alkauskas-ab}. In particular, %the last paper contains a diagram which classifies flows with rational %vector fields arithmetically - rational, algebraic, unramified, abelian, %and integral flows (see \cite{alkauskas} for corrections).
\subsection{Commutativity} 
\label{sub-comm}
Now let use the standard notation in differential geometry, where a vector field is interpreted as derivation of smooth functions \cite{conlon, gadea}. Let
\begin{eqnarray*}
X=\sum\limits_{i=1}^{n}f_{i}\frac{\partial}{\partial x_{i}},\quad
Y=\sum\limits_{i=1}^{n}g_{i}\frac{\partial}{\partial x_{i}}
\end{eqnarray*}
be two vector fields in an open domain of $\mathbb{R}^{n}$. Then the \emph{Lie bracket} of these two vector fields is defined by (\cite{conlon}, Section 2)
\begin{eqnarray}
[X,Y]=\sum\limits_{i=1}^{n}\Big{(}X(g_{i})-Y(f_{i})\Big{)}
\frac{\partial}{\partial x_{i}}.
\label{brack}
\end{eqnarray}
Vector fields commute, if $[X,Y]=0$. For corresponding flows $F$ and $G$, this means that $F(G(\m{x},s),t)=G(F(\m{x},t),s)$, for $s,t$ small enough.\\

In this paper, we amend the items i) through v) of Subsection \ref{sub-motiv} with the following natural problem.
\begin{prob}In dimension $n$, describe all maximal collections of commutative rational vector fields, up to birational equivalence. Can they be explicitly integrated? What about if we limit vector fields to being $s$-homogeneous, $s\in\mathbb{Z}$, and birational transformations as being $1$-homogeneous?
\end{prob}
In this paper we fully solve this problem in case $n=s=2$; see Proposition \ref{level-0} and Theorem \ref{thm-main} in Section \ref{sec:cor}. In the last Section \ref{high-dim} we strengthen this Problem. Explicit integration of commuting vector fields is our new and chief contribution to the subject. This, yet again, emphasizes the need to consider $2$-homogeneous vector fields separately. As noted in (\cite{alkauskas-super1}, Subsection 7.3), any flow with rational vector in $\mathbb{R}^{n}$ can be described in terms of a projective flow in $\mathbb{R}^{n+1}$, so limiting ourselves to projective flows in higher dimensions is not less general approach than to consider any rational vector fields, homogeneous or not. 

\section{Overview}It is impossible to summarize a research on plane vector fields. For example, there exists around $2000$ papers on quadratic vector fields in the plane alone. In this section we briefly touch few questions related to polynomial or rational vector fields, and more thoroughly will delve into the subject related to commutativity. \\

A holomorphic vector field on the manifold $M$ is said to be \emph{complete}, if for every $P\in M$, the solution of the differential equation at $P$ is defined for every complex value of the time variable $t$. For example, the vector field $x^2\frac{\partial}{\partial x}$ is not complete on $\mathbb{R}$, since the flow it generates, $F(x,t)=\frac{x}{1-xt}$, is not defined for $t=x^{-1}$. Meanwhile, a vector field $x\frac{\partial}{\partial x}$ defines a flow $F(x,t)=xe^{t}$, and so is complete in $\mathbb{R}$. In \cite{bustinduy}, the author proves that a complete polynomial (where both coordinates are not simultaneously linear) vector field on $\mathbb{C}^{2}$ has at most one zero. In \cite{JDG} the authors study  a necessary condition for the integrability of the polynomial vector fields in the plane by means of the differential Galois Theory. They employ variational equations around a particular solution to obtained a necessary condition for the existence of a rational first integral.  In \cite{coutinho-m1}, the authors present an algorithm which can be used to check whether a given derivation (vector field) of the complex affine plane has an invariant algebraic curve. In \cite{ferragut}, the remarkable values for polynomial vector fields in the plane having a rational first integral are investigated from a dynamical point of view. If $H=f/g$ is a rational first integral for the polynomial vector field, then $c\in\mathbb{C}\cup\{\infty\}$ is called \emph{the remarkable value of}  $H$, if $f+cg$ is a reducible polynomial in $\mathbb{C}[x,y]$.\\

Let us now turn to papers with Lie bracket in mind. In \cite{petravchuk} the author proves the following result. Let us treat polynomial vector fields as derivations $D:k[x,y]\mapsto k[x,y]$, where the field $k$ is of characteristic $0$. Then if two derivations $D_{1}$ and $D_{2}$ commute (this is the same as saying that vector fields commute), then (i) either they have a common polynomial eigenfunction, i.e. non-constant polynomial $f\in k[x,y]$ such that $D_{1}(f)=\lambda f$, $D_{2}(f)=\mu f$ for some $\lambda,\mu\in k[x,y]$, or (ii) they are Jacobian derivations
\begin{eqnarray*}
D_{1}(g)=D_{u}(g):=\begin{vmatrix}
\frac{\partial u}{\partial x} & \frac{\partial u}{\partial y}\\
\frac{\partial g}{\partial x} & \frac{\partial g}{\partial y}
\end{vmatrix},
\end{eqnarray*}
and $D_{2}(g)=D_{v}(g)$, for all $g\in k[x,y]$. A polynomial eigenfunction of a derivation is also called \emph{a Darboux polynomial}. In \cite{li}, the author generalize the last result, proving that $n$ pairwise commuting derivations of the polynomial
ring in $n$ variables over a field of characteristic
$0$ form a commutative basis of derivations if and only if they
are $k$-linearly independent and have no common Darboux polynomials.\\

In \cite{choud} the authors consider vector fields which are not necessarily polynomial. They use methods of linearization and commutation to tackle the isochronisity problem, and use Darboux polynomials to obtain inverse integrating factor for a ODE system. \emph{Isochronicity} - it is when periodic orbits around the center of a vector field have the same period, like in the vector field $(-y,x)$ case. This is intricately related to \emph{stability}. A a vector field is said to have a \emph{center} at a point $P$, if there exists a punctured neighbourhood of $P$ in which every orbit is a closed non-trivial loop.  The \emph{linearization} is a local diffeomorphism which transforms the system in question into $\{\dot{x}=-y,\dot{y}=x\}$. Several examples of commuting polynomial vector fields are presented. For example,
\begin{eqnarray*}
(-y,x+3xy+x^3),\text{ and }(-x-xy-x^3,-y-y^2+x^2+x^4),
\end{eqnarray*}
also (\emph{the cubic Kolmogorov system})
\begin{eqnarray*}
(-y+2xy-ax^2y,x-x^2+y^2-axy^2)\text{ and }(x-x^2+y^2-axy^2,y-2xy-ay^3),\quad a\in\mathbb{R}.
\end{eqnarray*}
MAPLE packages {\tt DifferentialGeometry} and {\tt LieAlgebras} check commutativity for us.  More about relations of commutators, Lie bracktets to linearization and isochronisity can be found in \cite{freire,maza-1}. In \cite{nagloo}, the authors cite the classical theorem (Theorem 2.1, \cite{nagloo}) which allows to integrate the vector field if one knows another linearly independent field which commutes with it. \emph{A posteriori}, note that we will also encounter few aspects of this method. In particular, we will essentially use the Wr\'{o}nskian of the system (\ref{sys}); see (\ref{wronskian}). \\

In \cite{gine-maza} the authors give a constructive procedure to get the change of variables that orbitally linearizes a smooth planar vector field on around an elementary singular point (or a nilpotent singular point) from a given infinitesimal generator of a Lie symmetry. Recall that a vector field $Y$ is an \emph{infinitesimal generator} of a Lie symmetry of a vector field $X$, if the commutation relation $[X,Y]=\nu(x,y)X$ holds for some smooth scalar function $\nu(x,y)$. Also, the autonomous ODE system $\{\dot{x}=P(x,y),\dot{y}=Q(x,y)\}$ is said to be \emph{orbitally linearizible} at the origin $(0,0)$ (which is assumed to be a singular point), if there exists a smooth-near identity change of coordinates $(u(x,y),v(x,y))=(x+o(x,y),y+o(x,y))$ in the neighbourhood $U\subset\mathbb{C}^{2}$ of the origin, which transforms the initial system into
\begin{eqnarray*}
\left\{\begin{array}{l}
\dot{u}=\lambda u\cdot h(u,v),\\
\dot{v}=\mu v\cdot h(u,v),
\end{array}
\right.
\end{eqnarray*}
$\lambda,\mu\in\mathbb{C}$, and $h(u,v)$ is a smooth scalar function in $U$, $h(0,0)\neq 0$.

\label{review} 
\section{Arithmetic of rational vector fields} In this section we present few examples which show that integration of rational $2$-dimensional $2$-homogeneous vector fields greatly depend on the fine arithmetic structure of vector field itself. We put $\m{x}=(x,y)$.

\subsection{Rational flow} Consider the vector field $(3y^2,y^3x^{-1})$. It can be integrated explicitly, and the outcome is the flow
\begin{eqnarray*}
\phi(x,y)=\big{(}u(x,y),v(x,y)\big{)}=\bigg{(}\frac{(y^2+x)^3}{x^2},\frac{y(y^2+x)}{x}\bigg{)}.
\end{eqnarray*}
This is a rational flow of level $2$. The orbits are curves $v^3u^{-1}=\mathrm{const.}$

\subsection{Algebraic flow}
The flow $\phi(\m{x})=(u(x,y),v(x,y))$ generated by the vector field $(-4x^2+3xy,-2xy+y^2)$ is algebraic and is given by the expression (\cite{alkauskas-ab}, Proposition 2)
\begin{eqnarray*}
\phi(x,y)=\big{(}u(x,y),v(x,y)\big{)}=\left(\frac{y\sqrt{4x+(y-1)^2}+y^2+2x-y}{8x+2(y-1)^2},\frac{y}{\sqrt{4x+(y-1)^2}}\right).
\end{eqnarray*}
The orbits of this flow are genus $0$ curves $u^{-1}(u-v)^{-1}v^{4}=\mathrm{const.}$ So, this is also level $2$ flow.

\subsection{Abelian non-algebraic flow} The flow $\phi(\m{x})=(u(x,y), v(x,y))$ generated by the vector field $(2x^2-4xy,-3xy+y^2)$ is abelian flow and is given by the analytic expression (\cite{alkauskas-ab}, Proposition 4)
\begin{eqnarray}
\phi(\m{x})=
\left(\frac{\m{k}^{4/5}\big{(}\alpha(\frac{x}{y})-\v\big{)}\v}{\Big{(}\m{k}\big{(}\alpha(\frac{x}{y})-\v\big{)}-1\Big{)}^{2/5}}\,,
\frac{\v}{\m{k}^{1/5}\big{(}\alpha(\frac{x}{y})-\v\big{)}\cdot\Big{(}\m{k}\big{(}\alpha(\frac{x}{y})-\v\big{)}-1\Big{)}^{2/5}}\right).
\label{abel}
\end{eqnarray}
Here $\v=\v(x,y)=[x(x-y)^{2}y^{2}]^{1/5}$, $\alpha$ is an abelian integral
\begin{eqnarray*}
\alpha(x)=\frac{1}{5}
\int\limits_{1}^{\frac{1}{1-x}}\frac{\d t}{t^{3/5}(t-1)^{4/5}},
\end{eqnarray*}
and $\m{k}$ is an abelian function, the inverse of $\alpha$. The orbits of this flow are genus $2$ curves $x(x-y)^{2}y^{2}=\mathrm{const.}$ In the special case, one has
\begin{eqnarray*}
\frac{u(x,-x)}{v(x,-x)}=\m{k}\big{(}c-4^{1/5}x\big{)},\quad c=\alpha(-1).
\end{eqnarray*}
The pair of abelian functions $\big{(}\m{k}(z),\m{k}'(z)\big{)}$ parametrizes (locally) the genus $2$ singular curve $5^{5}(1-x)^{3}x^{4}=y^{5}$. In particular, one can give an alternative expression
\begin{eqnarray*}
\phi(\m{x})=
\left(\frac{5^{2/3}\m{k}^{4/3}\big{(}\alpha(\frac{x}{y})-\v\big{)}\v}{\m{k}'\,^{2/3}\big{(}\alpha(\frac{x}{y})-\v\big{)}}\,,
\frac{5^{2/3}\m{k}^{1/3}\big{(}\alpha(\frac{x}{y})-\v\big{)}\v}{\m{k}'\,^{2/3}\big{(}\alpha(\frac{x}{y})-\v\big{)}}\right).
\end{eqnarray*}
As was noted several times in \cite{alkauskas-super1}, using further tools from the theory of abelian functions, like Abel-Jacobi theorem \cite{lang}, it is possible to present the closed-form expression for $\phi$ which involves only abelian functions but \emph{not} abelian integrals. This task is carried out for the icosahedral superflow in detail in \cite{alkauskas-super2,alkauskas-super4}, where after the triple reduction, the flow can be described in terms of abelian functions over the curve of genus $3$. In case the orbits of the flow are elliptic curves (like for the tetrahedral superflow case in \cite{alkauskas-super1}, or an unramified flow with the vector field $(x^2-2xy,y^2-2xy)$ in \cite{alkauskas-un}), this further simplification amounts to application of addition formulas for elliptic functions, either in Weierstrass or Jacobi form. This also applies to the case when the flow can be described in terms of elliptic curves via a reduction of hyperelliptic curves into elliptic ones, like for the octahedral superflow in \cite{alkauskas-super1}. Such reduction itself traces its roots from the works of Legendre in \cite{legendre}.\\

In the present case of the flow (\ref{abel}), the orbits are of genus $2$, and thus the situation is more complicated.
\subsection{Non-abelian flow} The flow $\phi(\m{x})=(u(x,y),v(x,y))$ generated by the vector field $(x^2+xy+y^2,xy+y^2)$ is non-abelian integral flow and is given by the expression (\cite{alkauskas-ab}, Proposition 6)
\begin{eqnarray*}
\big{(}u(x,y),v(x,y)\big{)}=\bigg{(}\psi\v\exp\Big{(}\psi+\frac{\psi^{2}}{2}\Big{)}\,,\v\exp\Big{(}\psi+\frac{\psi^{2}}{2}\Big{)}\bigg{)};\\
\text{ here } 
\psi=\ll\Big{(}\beta\Big{(}\frac{x}{y}\Big{)}-\v\Big{)},
\v=\exp\Bigg{(}-\frac{x}{y}-\frac{x^2}{2y^2}\Bigg{)}y,
\end{eqnarray*}
and where $\beta(x)$ is the error function
\begin{eqnarray*}
\beta(x)=-\frac{\sqrt{\pi e}}{\sqrt{2}}\,\mathrm{erf}\bigg{(}\frac{x+1}{\sqrt{2}}\bigg{)},\quad
\mathrm{erf}(x)=\frac{2}{\sqrt{\pi}}\int\limits_{0}^{x}e^{-t^2}\d t,
\end{eqnarray*} 
and $\ll$ is the inverse of $\beta$. The orbits of this flow are transcendental curves $\v=\mathrm{const.}$\\

This is roughly the arithmetic hierarchy of $2$-dimensional $2$-homogeneous rational vector fields. The majority of other $2$-homogeneous vector fields produce flows whose orbits are non-arithmetic objects. For example, orbits of the vector field $(\sqrt{2}xy,-y^2)$ are curves $uy^{\sqrt{2}}=\mathrm{const.}$
\subsection{$3$-dimensional case}The situation in higher dimensions is much more diverse. In a $3$-dimensional case, the flow might be rational, algebraic, abelian (possessing two rational first integrals), or confined on an algebraic surface (possessing exactly one rational first integral), or possessing no rational first integrals at all. For example, the example of Jouanolou \cite{jouanolou} shows that 
the vector field 
\begin{eqnarray*}
(y^2,z^2,x^2)
\end{eqnarray*}
does not have a rational first integral. For the glimpse into this profound subject, touching algebraic geometry, constants of derivation, foliations, algebraic leaves, and so on, we refer to \cite{maciejewski,ollagnier,nowicki-torun}.

\section{Commutative projective flows}
\label{sec:cor}
\subsection{Main results}
And so, now we investigate rational $2$-homogeneous vector fields which commute. It appears that, apart from level $0$ flows, this is satisfied only for special pairs of algebraic flows of level $1$. This is rather a remarkable fact. Nevertheless, one should expect, at least in a $2$-dimensional case, this kind of fact beforehand, since we can easily imagine that two pairs of $2$-dimensional algebraic functions $(u,v)$ and $(a,b)$ commute, due to some inner dependence, while it is hardly imaginable that this might happen for functions which have the appearance, for example, similar to that of (\ref{abel}). Indeed, commutativity of vector fields implies, among other properties, that for corresponding flows, given by $(u,v)$ and $(a,b)$, the identities $u(a,b)=a(u,v)$, $v(a,b)=b(u,v)$ hold. \\

 As a very simple consequence of the main differential system, we have the following result.  
\begin{prop}
\label{level-0}
Assume that two projective flows $\phi(\m{x})\neq (x,y)$ and $\psi(\m{x})\neq (x,y)$ with rational vector fields $(\varpi,\varrho)$ and $(\alpha,\beta)$, respectively, commute. Assume that $y\varpi-x\varrho=0$. Then $y\alpha-x\beta=0$. Thus, they are both level $0$ rational flows \cite{alkauskas}.\\

Conversely, assume $J(x,y)$ and $K(x,y)$ are arbitrary $1$-homogeneous rational functions, and let us define level $0$ flows
\begin{eqnarray*}
\phi(x,y)=\bigg{(}\frac{x}{1-J(x,y)},\frac{y}{1-J(x,y)}\bigg{)},\quad \psi(x,y)=\bigg{(}\frac{x}{1-K(x,y)},\frac{y}{1-K(x,y)}\bigg{)}.
\end{eqnarray*}    
Then $\phi$ and $\psi$ commute, their vector fields are $(xJ(x,y),yJ(x,y))$ and $(xK(x,y),yK(x,y))$, respectively, and
\begin{eqnarray*}
\phi\circ\psi(x,y)=\bigg{(}\frac{x}{1-J(x,y)-K(x,y)},\frac{y}{1-J(x,y)-K(x,y)}\bigg{)}.
\end{eqnarray*}
\end{prop}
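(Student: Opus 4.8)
The plan is to prove the two assertions separately. The forward implication is a short algebraic reduction of the commutativity equations using Euler's homogeneity relation; the converse is an explicit composition.

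For the forward part, the hypothesis $y\varpi-x\varrho=0$ together with the $2$-homogeneity of $\varpi,\varrho$ forces $\varpi=xg$, $\varrho=yg$ with $g$ a $1$-homogeneous rational function, and $g\not\equiv0$ — otherwise the vector field of $\phi$ is zero and $\phi=x\bl y$, contrary to hypothesis. I would then substitute this into the main differential system, namely the pair of equations expressing that the Lie bracket of the vector fields $\varpi\bl\varrho$ and $\alpha\bl\beta$ vanishes,
\[
\varpi\alpha_{x}+\varrho\alpha_{y}-\alpha\varpi_{x}-\beta\varpi_{y}=0,\qquad \varpi\beta_{x}+\varrho\beta_{y}-\alpha\varrho_{x}-\beta\varrho_{y}=0.
\]
Using $\varpi_{x}=g+xg_{x}$, $\varpi_{y}=xg_{y}$, $\varrho_{x}=yg_{x}$, $\varrho_{y}=g+yg_{y}$, and then replacing $x\alpha_{x}+y\alpha_{y}$ by $2\alpha$ and $x\beta_{x}+y\beta_{y}$ by $2\beta$ (Euler's relation for the $2$-homogeneous functions $\alpha,\beta$), both equations collapse to $g\alpha=x(g_{x}\alpha+g_{y}\beta)$ and $g\beta=y(g_{x}\alpha+g_{y}\beta)$. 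Eliminating the common factor $g_{x}\alpha+g_{y}\beta$ gives $yg\alpha=xg\beta$ as an identity of rational functions, and cancelling $g\not\equiv0$ yields $y\alpha-x\beta=0$. By the cited fact, $\phi$ (since $y\varpi-x\varrho=0$) and $\psi$ (since $y\alpha-x\beta=0$) are then level $0$ rational flows.

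For the converse, I would first note that for any $1$-homogeneous rational $L$ the map $\m{x}\mapsto\frac{x}{1-L}\bl\frac{y}{1-L}$ is a level $0$ projective flow: $1$-homogeneity gives $\phi^{z}(\m{x})=z^{-1}\phi(\m{x}z)=\frac{x}{1-zL}\bl\frac{y}{1-zL}$, which meets the boundary condition (\ref{init}) and, by a direct check, the translation equation (\ref{funk}). Then I would compute $\phi\circ\psi$ with $\psi(x,y)=\frac{x}{1-K}\bl\frac{y}{1-K}$: by $1$-homogeneity $J\big(\frac{x}{1-K},\frac{y}{1-K}\big)=\frac{J(x,y)}{1-K(x,y)}$, so $1-J(\psi(\m{x}))=\frac{1-J-K}{1-K}$, and dividing the two components of $\psi(\m{x})$ by this factor gives $\phi\circ\psi(x,y)=\frac{x}{1-J-K}\bl\frac{y}{1-J-K}$. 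This is symmetric in $J$ and $K$, hence equals $\psi\circ\phi$, so $\phi$ and $\psi$ commute; and it is again of the same form (with $J+K$ in place of $J$), hence a projective flow, which is exactly the displayed identity.

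The only step that is not routine is identifying the commutativity of $\phi$ and $\psi$ with the two Lie-bracket equations above — that is, recording the main differential system — but this input is available by assumption; everything after it is Euler's relation plus cancellation of a nonzero rational factor, which is why the result is "a very simple consequence."
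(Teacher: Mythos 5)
Your argument is correct and follows essentially the same route as the paper: commutativity is encoded by the vanishing Lie bracket, and Euler's identity for the $2$-homogeneous components collapses the system to the statement about $y\alpha-x\beta$. The only differences are cosmetic: the paper argues by contradiction (if $y\alpha-x\beta\neq 0$, its reduced system forces $\varrho_{x}=\varpi_{y}=0$, hence $\varpi=dx^{2}$, $\varrho=cy^{2}$, and $y\varpi-x\varrho=0$ then yields the identity flow, contradicting $\phi\neq x\bl y$), whereas you parametrize $\varpi=xg$, $\varrho=yg$ and cancel the nonzero rational factor $g$; your explicit check of the converse composition via $1$-homogeneity of $J$ and $K$ is the same immediate calculation the paper leaves implicit.
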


Next, we formulate the main result of this paper.
\begin{thm}
\label{thm-main}
Suppose, two projective flows $\phi(\m{x})\neq (x,y)$ and $\psi(\m{x})\neq (x,y)$ with rational vector fields $(\varpi,\varrho)$ and $(\alpha,\beta)$ commute. Suppose $y\varpi-x\varrho\neq 0$. Then $\phi$ and $\psi$ are level $1$ algebraic flows.\\

Conversely - for any algebraic projective flow $\phi$ of level $1$, there exists another algebraic flow $\psi$ of level $1$, such that all flows, which commute with $\phi$, are given by $\phi^{z}\circ\psi^{w}$, $z,w\in\mathbb{R}$. \\

Practically commuting projective flows can be constructed as follows.\\ 

Let $\mathscr{V}(x,y)\neq cy$ be a $1$-homogeneous rational function. Let us define algebraic functions $a(x,y)$ and $u(x,y)$ from the equations 
\begin{eqnarray}
\mathscr{V}\Big{(}a(x,y),\frac{y}{y+1}\Big{)}&=&\mathscr{V}(x,y),\label{a-def}\\
\frac{\mathscr{V}(x,y)}{1-\mathscr{V}(x,y)}&=&\mathscr{V}\big{(}u(x,y),y\big{)}.\label{u-def}
\end{eqnarray}
Then projective flows
\begin{eqnarray*}
\psi=\Big{(}a(x,y),\frac{y}{y+1}\Big{)}\text{ and }\phi=\big{(}u(x,y),y\big{)}
\end{eqnarray*} 
commute. Any pair of commutative, level $1$ algebraic projective flows can be obtained from these pairs via conjugation with $1$-homogeneous birational plane transformation.\\

Finally, the orbits of the projective flow 
\begin{eqnarray*}
\phi^{z}\circ\psi^{w}=\left(u^{z}\Big{(}a^{w}(x,y),\frac{y}{1+wy}\Big{)},\frac{y}{1+wy}\right)
\end{eqnarray*}
 are given by a $1$-homogeneous (in $x,y$) function
\begin{eqnarray*}
\frac{\mathscr{V}(x,y)y}{z\mathscr{V}(x,y)+wy}=\mathrm{const}.
\end{eqnarray*}
 \end{thm}

To be clear which branch of algebraic function is chosen, we note that the equalities (\ref{a-def}), (\ref{u-def}) and $1$-homogeneity of $\mathscr{V}$ imply that
\begin{eqnarray*}
\mathscr{V}\Big{(}\frac{a(xz,yz)}{z},\frac{y}{yz+1}\Big{)}=\mathscr{V}(x,y),\quad
\frac{\mathscr{V}(x,y)}{1-z\mathscr{V}(x,y)}=\mathscr{V}\Big{(}\frac{u(xz,yz)}{z},y\Big{)}.
\end{eqnarray*}
So, we must choose such branches that are compatible with boundary conditions (\ref{bound}); that is, $\lim_{z\rightarrow 0}\frac{u(xz,yz)}{z}=x$, $\lim_{z\rightarrow 0}\frac{a(xz,yz)}{z}=x$.\\
 
One of the consequences that the flows $(a(x,y),\frac{y}{y+1})$ and $(u(x,y),y)$ commute is the identity among algebraic functions
\begin{eqnarray*}
u\Big{(}a(x,y),\frac{y}{y+1}\Big{)}=a\big{(}u(x,y),y\big{)}.
\end{eqnarray*}
This can be verified easily; below we set $a=a(x,y)$, $u=u(x,y)$:
\begin{eqnarray*}
\mathscr{V}\left(u\Big{(}a,\frac{y}{y+1}\Big{)},\frac{y}{y+1}\right)\mathop{=}^{(\ref{u-def})}\frac{\mathscr{V}(a,\frac{y}{y+1})}{1-\mathscr{V}(a,\frac{y}{y+1})}\mathop{=}^{(\ref{a-def})}
\frac{\mathscr{V}(x,y)}{1-\mathscr{V}(x,y)}\mathop{=}^{(\ref{u-def})}\mathscr{V}(u,y)\mathop{=}^{(\ref{a-def})}\mathscr{V}\Big{(}a(u,y),\frac{y}{y+1}\Big{)}.
\end{eqnarray*}
The correct choice of branches implies the needed identity.\\

We will need the following result, proved in two independent ways in \cite{alkauskas,alkauskas-2}. Birational $1$-homogeneous maps ($1$-BIR for short) $\mathbb{R}^{2}\mapsto\mathbb{R}^{2}$ were described in \cite{alkauskas}. They are either non-degenerate linear maps, either birational maps of the form $\ell_{P,Q}$, given by 
\begin{eqnarray}
\quad\ell_{P,Q}(x,y)=\bigg{(}\frac{xP(x,y)}{Q(x,y)},\frac{yP(x,y)}{Q(x,y)}\bigg{)}, 
\label{bir}
\end{eqnarray}
where $P,Q$ are homogeneous polynomials of the same degree, or are a composition of a linear map and some $\ell_{P,Q}$. By a direct calculation, $\ell^{-1}_{P,Q}(x,y)=\ell_{Q,P}(x,y)$. If $\phi$ is a projective flow with rational vector field, and $\ell$ is a $1$-BIR, then $\ell^{-1}\circ\phi\circ\ell$ is also a projective flow with a rational vector field. Let $v(\phi,\m{x})$ be a vector field of the projective flow $\phi$. \\

The following result describes transformation of the vector field under conjugation with $\ell_{P,Q}$. (Note that we mentioned the general case for such transformations as a crucial ingredient in dealing with rational vector fields in the middle of Subsection \ref{sub-motiv}).  
  
\begin{prop}
\label{prop-trans}
Consider $\ell_{P,Q}$ given by (\ref{bir}), and let $A(x,y)=P(x,y)Q^{-1}(x,y)$, which is a $0$-homogeneous function. Suppose that $v(\phi,\m{x})=(\varpi(x,y),\varrho(x,y))$. Then
\begin{eqnarray*}
v(\ell^{-1}_{P,Q}\circ\phi\circ\ell_{P,Q};\m{x})=\big{(}\varpi_{0}(x,y),\varrho_{0}(x,y)\big{)},
\end{eqnarray*}
where
\begin{eqnarray}
\left\{\begin{array}{c}
\varpi_{0}(x,y)=A(x,y)\varpi(x,y)-A_{y}[x\varrho(x,y)-y\varpi(x,y)],\\
\varrho_{0}(x,y)=A(x,y)\varrho(x,y)+A_{x}[x\varrho(x,y)-y\varpi(x,y)].
\end{array}
\right.
\label{vecconj}
\end{eqnarray}

As a corollary,
\begin{eqnarray*}
x\varrho_{0}(x,y)-y\varpi_{0}(x,y)=A(x,y)[x\varrho(x,y)-y\varpi(x,y)].
\end{eqnarray*} 
 \label{conjug}
\end{prop}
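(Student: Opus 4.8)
The plan is to deduce (\ref{vecconj}) from the ordinary transformation law for vector fields under conjugation by the plane map $\ell_{P,Q}$, and then to run a short matrix computation. Write $\ell=\ell_{P,Q}$, so that $\ell^{-1}=\ell_{Q,P}$ and $\ell(\m{x})=A(\m{x})\,\m{x}$ with $A=PQ^{-1}$ a $0$-homogeneous function. First I would observe, using only the $1$-homogeneity of $\ell$ and of $\ell^{-1}$ together with $\phi^{z}(\m{x})=z^{-1}\phi(\m{x}z)$, that the conjugated flow $\psi:=\ell^{-1}\circ\phi\circ\ell$ satisfies $\psi^{z}(\m{x})=\ell^{-1}\big(\phi^{z}(\ell(\m{x}))\big)$ for every $z$. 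Differentiating this identity at $z=0$ and invoking the definition (\ref{vec}) of the vector field, the boundary condition $\phi^{0}=\mathrm{id}$ from (\ref{init}), the chain rule, and the inverse function theorem, I would obtain
\begin{eqnarray*}
v(\psi;\m{x})=\big(D\ell(\m{x})\big)^{-1}\cdot v\big(\phi;\ell(\m{x})\big),
\end{eqnarray*}
i.e.\ the vector field of a smooth projective flow transforms under a $1$-BIR conjugation exactly as an ordinary vector field does under the underlying plane map.

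Next I would evaluate the three ingredients. From $\ell(x,y)=A(x,y)x\bl A(x,y)y$ one gets the Jacobian
\begin{eqnarray*}
D\ell(x,y)=\begin{pmatrix} A+xA_{x} & xA_{y}\\ yA_{x} & A+yA_{y}\end{pmatrix},\qquad
\det D\ell(x,y)=A^{2}+A(xA_{x}+yA_{y})=A^{2},
\end{eqnarray*}
the last equality being Euler's relation $xA_{x}+yA_{y}=0$ for the $0$-homogeneous $A$; since $\ell$ is birational, $A\not\equiv 0$, so
\begin{eqnarray*}
\big(D\ell(x,y)\big)^{-1}=\frac{1}{A^{2}}\begin{pmatrix} A+yA_{y} & -xA_{y}\\ -yA_{x} & A+xA_{x}\end{pmatrix}.
\end{eqnarray*}
Because $\varpi\bl\varrho$ is $2$-homogeneous and $\ell(\m{x})=A\m{x}$, one has $v(\phi;\ell(\m{x}))=A^{2}\varpi(x,y)\bl A^{2}\varrho(x,y)$; hence the factors $A^{\pm 2}$ cancel and the product reads
\begin{eqnarray*}
\varpi'\bl\varrho'=\big((A+yA_{y})\varpi-xA_{y}\varrho\big)\bl\big((A+xA_{x})\varrho-yA_{x}\varpi\big).
\end{eqnarray*}

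Finally I would regroup each component, $A\varpi+(yA_{y}\varpi-xA_{y}\varrho)=A\varpi-A_{y}(x\varrho-y\varpi)$ and $A\varrho+(xA_{x}\varrho-yA_{x}\varpi)=A\varrho+A_{x}(x\varrho-y\varpi)$, which is exactly the stated form (\ref{vecconj}); the corollary then follows by substitution, $x\varrho'-y\varpi'=A(x\varrho-y\varpi)+(xA_{x}+yA_{y})(x\varrho-y\varpi)=A(x\varrho-y\varpi)$, using Euler's relation once more. I do not expect a genuine obstacle once the reduction of the first paragraph is in hand: the two steps that must then be executed with care are precisely the two uses of $xA_{x}+yA_{y}=0$ — collapsing $\det D\ell$ to $A^{2}$ (so that this factor cancels the $A^{-2}$ in $(D\ell)^{-1}$ against the $A^{2}$ coming from the $2$-homogeneity of $\varpi\bl\varrho$) and the last line of the corollary — while everything else is bookkeeping with homogeneity degrees. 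Alternatively, one could verify (\ref{vecconj}) by plugging $\varpi',\varrho'$ and the coordinates of $\ell^{-1}\circ\phi\circ\ell$ into the PDE (\ref{pde}), but the pullback route above is shorter.
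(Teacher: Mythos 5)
Your argument is correct, and every step checks out: the reduction $\psi^{z}=\ell^{-1}\circ\phi^{z}\circ\ell$ (valid because both $\ell_{P,Q}$ and $\ell^{-1}_{P,Q}=\ell_{Q,P}$ are $1$-homogeneous), the pullback identity $v(\ell^{-1}\circ\phi\circ\ell;\m{x})=(D\ell(\m{x}))^{-1}v(\phi;\ell(\m{x}))$ obtained by differentiating at $z=0$ with $\phi^{0}=\mathrm{id}$, the Jacobian of $\ell=A\m{x}$ with $\det D\ell=A^{2}$ (the cross terms $xyA_{x}A_{y}$ cancel and Euler's relation $xA_{x}+yA_{y}=0$ kills the middle term), the factor $A^{2}$ from $2$-homogeneity of $\varpi\bl\varrho$ cancelling $A^{-2}$, and the final regrouping into $A\varpi-A_{y}[x\varrho-y\varpi]\bl A\varrho+A_{x}[x\varrho-y\varpi]$, together with the corollary via a second use of Euler's relation. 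Note that the paper itself does not prove this proposition: it is quoted from the earlier papers \cite{alkauskas,alkauskas-2}, so there is no in-text argument to compare against. Your derivation is the natural self-contained one — treating the vector field as transforming by the differential of the conjugating map and exploiting the two homogeneity degrees ($0$ for $A$, $2$ for the field) — and it is exactly the kind of computation those references carry out; the alternative you mention (verifying the PDE (\ref{pde}) for the conjugated flow directly) would also work but is indeed longer. No gaps.
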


\subsection{The core of the proof}Our main ideas how to prove Theorem \ref{thm-main} are very transparent and can be described immediately as follows. \\

First, we write a condition that vector fields of flows $\phi=(u,v)$ and $\psi=(a,b)$ commute. This is tantamount to the property that Lie bracket of both vector fields vanish. After some transformations we arrive at the linear system of two ODEs. A trivial case aside (Proposition \ref{level-0}), calculation of its Wr\'{o}nskian shows that both flows are in fact abelian flows (flows whose orbits are algebraic curves) of level $1$ - there are two homogeneous rational functions $\mathscr{W}$ and $\mathscr{V}$ of homogeneity degree $1$, such that $\mathscr{W}(u,v)=\mathscr{W}(x,y)$, $\mathscr{V}(a,b)=\mathscr{V}(x,y)$.\\

 But being abelian flow of level $1$, at its turn, implies that with a help of conjugation with a proper $1$-BIR, the second coordinate of the vector field of the flow $\phi$ can be made identically zero - this is possible only for level $1$ abelian flows. Suppose, this holds. Now, the differential system is much more simple, and it implies that the second coordinate of the vector field of the flow $\psi$ is equal to $cy^2$. Since these two vector fields must be non-proportional, it gives $c\neq 0$; for example, after conjugation with a homothety we may assume that $c=-1$. But then the second variable split, and we have $b=\frac{y}{y+1}$. So, without loss of generality, after performing a $1$-BIR conjugation, we can consider $\phi=(u,y)$ and $\psi=(a,\frac{y}{y+1})$. Since $\mathscr{V}(a,\frac{y}{y+1})=\mathscr{V}(x,y)$, this shows that $a$ is an algebraic function, and from symmetry considerations we get that $u$ is algebraic function, too - if a projective flow is $1$-BIR conjugate to algebraic flow, a flow itself must be algebraic.\\

To find when a vector field $(\varpi,\varrho)$ produces algebraic flow of level $1$, we need the following criterion, which follows from results in \cite{alkauskas-2}. 
\begin{prop}
\label{prop-crit}
Let $(\varpi,\varrho)$ is a pair or $2$-homogeneous rational functions, $x\varrho-y\varpi\neq 0$. Let $\varrho(x)=\varrho(x,1)$, $\varpi(x)=\varpi(x,1)$. A projective flow with a vector field $(\varpi,\varrho)$ is a level $1$ algebraic flow if and only if all the solutions of the ODE
\begin{eqnarray*}
f(x)\varrho(x)+f'(x)(x\varrho(x)-\varpi(x))=1
\end{eqnarray*}
are rational functions.
\end{prop}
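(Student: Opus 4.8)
The plan is to exploit that the displayed ODE is first-order linear \emph{inhomogeneous}, so its solution set is $\{f_{0}+Cf_{h}:C\in\mathbb{R}\}$, where $f_{0}$ is any particular solution and $f_{h}$ spans the solutions of the homogeneous equation $f\varrho+f'(x\varrho-\varpi)=0$; hence ``all solutions are rational'' is equivalent to ``$f_{h}$ rational \emph{and} $f_{0}$ rational''. (The hypothesis $x\varrho-y\varpi\neq 0$ forces $x\varrho(x)-\varpi(x)\not\equiv 0$, so the equation really is first order and $f_{h}$ is well defined and nowhere zero.) I would first pin down $f_{h}$: dehomogenising the orbit equation (\ref{orbits}) at $y=1$, the function $w(x):=\mathscr{W}(x,1)$ satisfies $w\varrho+w'(\varpi-x\varrho)=0$, which is the homogeneous equation with the $f'$-term of opposite sign; hence $f_{h}=c/w$. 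As $\mathscr{W}(x,y)=y\,w(x/y)$ is rational iff $w$ is, this gives: $f_{h}$ rational $\iff$ $\mathscr{W}$ rational $\iff$ $\phi$ is an abelian flow of level $1$ (it is not level $0$ since $x\varrho-y\varpi\neq 0$, and then the level is exactly $1$). This already matches one conjunct on each side of the asserted equivalence.

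For the remaining conjunct I would pass to a normal form. Writing $\mathscr{W}=p/q$ with $p,q$ homogeneous, $\deg p=\deg q+1$, the map $\ell:=\ell_{yq,\,p}$ of (\ref{bir}) acts radially, so by $1$-homogeneity of $\mathscr{W}$ the conjugated flow $\ell^{-1}\circ\phi\circ\ell$ has orbit function $(yq/p)\,\mathscr{W}=y$; its orbits are the lines $y=\mathrm{const}$, which forces the flow to preserve $y$ (so its second component is $y$) and the second component of its vector field to vanish. Thus every level-$1$ abelian flow is $1$-BIR conjugate to a flow with vector field $\varpi'\bl 0$, $\varpi'\not\equiv 0$. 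For such a flow (\ref{pde}) integrates explicitly: the characteristics give $u/y$ constant and, after $x=ys$, $\d s/\varpi'(s,1)=-\d y$, so, using (\ref{bound}), $u'(x,y)=y\,G^{-1}\!\big(G(x/y)+y\big)$ and $v'(x,y)=y$, where $G$ is a primitive of $1/\varpi'(\cdot,1)$. In these coordinates the ODE of the Proposition reads $-\varpi'(x)f'(x)=1$, with solutions $-G(x)+C$; so for this flow ``all solutions rational'' $\iff$ ``$G$ rational''. On the other hand, by the analysis of \cite{alkauskas-2} the normal-form flow is algebraic iff $u'$ is algebraic iff $G$ is rational: $G$ has rational derivative, so it is either rational or has genuine logarithmic branch points, and in the latter case a nonzero residue of $1/\varpi'(\cdot,1)$ produces non-algebraic (exponential or logarithmic) behaviour of the associated one-dimensional flow $\dot F=\varpi'(F,1)$, so $G^{-1}$ — hence $u'$ — is not algebraic. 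Thus the Proposition holds for the normal form.

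It remains to transport the two equivalences back along $\ell$. By Proposition~\ref{prop-trans}, with $a(x):=P(x,1)/Q(x,1)$ a nonzero rational function, the substitution $\tilde f=a\,f$ turns the ODE of the conjugated vector field into the ODE of the original one; hence ``all solutions rational'' is invariant under conjugation by any $\ell_{P,Q}$, and a linear $\ell$ (inducing a M\"{o}bius change of the variable $x/y$) is handled the same way, so the property is $1$-BIR invariant. Combined with the fact — recalled in the Introduction — that algebraicity of a flow is preserved under $1$-BIR conjugation, this settles both implications: if all solutions of the ODE are rational then $\mathscr{W}$ is rational, $\phi$ is level-$1$ abelian, hence conjugate to the normal form, which is then algebraic of level $1$, and so is $\phi$; conversely a level-$1$ algebraic $\phi$ is abelian, so $\mathscr{W}$ is rational, $\phi$ is conjugate to the normal form, which stays algebraic, so $G$ is rational and all solutions of the ODE are rational. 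I expect the main obstacle to be the normal-form algebraicity criterion ``$u'$ algebraic $\iff$ $G$ rational'': the direction $G$ rational $\Rightarrow$ $u'$ algebraic is immediate, but the converse must exclude logarithmic primitives with possibly \emph{irrational} residues still producing an algebraic $u'$, and it is there that the finer results of \cite{alkauskas-2} enter.
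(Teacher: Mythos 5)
Your proposal is correct in outline, and it necessarily takes a different route from the paper, because the paper does not prove this Proposition in the text at all: it is quoted as a special case ($N=1$) of the criterion from \cite{alkauskas-2} that a flow is algebraic of level $N$ exactly when every solution of the same ODE has the form $r(x)+\sigma q^{1/N}(x)$ with $r,q$ rational. What you do instead is essentially self-contained and uses only ingredients present in this paper: you observe that the homogeneous solution is $c/\mathscr{W}(x,1)$ (by comparing with the dehomogenized orbit equation (\ref{orbits})), so ``all solutions rational'' splits into ``$\mathscr{W}$ rational'', i.e.\ level-$1$ abelian, plus rationality of one particular solution; you then pass to the normal form $\varpi'\bl 0$ by the radial conjugation with $A=y/\mathscr{W}$ -- exactly the device the paper uses later in the proof of the main Theorem -- integrate explicitly there ($u=yG^{-1}(G(x/y)+y)$, ODE solutions $-G+C$), and transport back using your own lemma that the property ``all solutions rational'' is invariant under $1$-BIR conjugation. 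That lemma is right; the cleanest way to see it is that the ODE is the dehomogenization at $y=1$ of $\varpi F_{x}+\varrho F_{y}=-1$ for the $(-1)$-homogeneous function $F(x,y)=y^{-1}f(x/y)$, which is manifestly equivariant under $1$-homogeneous birational changes of coordinates; your direct computation with $\tilde f=af$ verifies the radial case, which is the only one you actually need. The trade-off: your argument proves only the level-$1$ case but from first principles, while the paper's citation gives the general level-$N$ statement.

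The one soft spot is the step you flag yourself: ``normal-form flow algebraic $\Rightarrow G$ rational'', for which you gesture at residues and then defer to \cite{alkauskas-2}. Since the paper refers the whole Proposition to that same source, this is not a defect relative to the paper, but you can close it in two lines and make the proof fully self-contained: specializing $x/y=s_{0}$ in $G(u/y)=G(x/y)+y$ shows that $G^{-1}(G(s_{0})+y)$ is a nonconstant algebraic function of $y$, hence $G^{-1}$, and therefore its inverse $G$, is algebraic; an algebraic function has finite local monodromy, so the primitive $G$ of the rational function $1/\varpi'(\cdot,1)$ can have no logarithmic terms, i.e.\ all residues vanish and $G$ is rational. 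With that remark inserted, your argument is complete.
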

On the other hand, a flow $\phi$ is algebraic flow of level $N$ if and only if any solution of the same differential equation can be written in the form $f(x)=r(x)+\sigma q^{1/N}(x)$, where $\sigma\in\mathbb{R}$, $r(x)$ and $q(x)$ are rational functions, and positive $N$ is the smallest possible \cite{alkauskas-2}.

\section{The proof}

We will need the following statement, whose proof is immediate.
\begin{prop}
Let $\phi$ and $\psi$ be two projective commuting flows, and $\ell$ be a $1$-BIR. Then the flows $\ell^{-1}\circ\phi\circ\ell$ and $\ell^{-1}\circ\psi\circ\ell$ commute, too.
\label{prop2}
\end{prop}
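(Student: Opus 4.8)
The plan is to use the fact, recalled just before Proposition~\ref{prop-trans}, that if $\chi$ is a projective flow with rational vector field and $\ell$ a $1$-BIR, then $\ell^{-1}\circ\chi\circ\ell$ is again a projective flow with rational vector field. Applying this with $\chi=\phi$ and with $\chi=\psi$, the two maps $\ell^{-1}\circ\phi\circ\ell$ and $\ell^{-1}\circ\psi\circ\ell$ are legitimate projective flows, so the only thing left to check is that they commute.

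That reduces to the elementary computation
\[
(\ell^{-1}\circ\phi\circ\ell)\circ(\ell^{-1}\circ\psi\circ\ell)=\ell^{-1}\circ\phi\circ\psi\circ\ell=\ell^{-1}\circ\psi\circ\phi\circ\ell=(\ell^{-1}\circ\psi\circ\ell)\circ(\ell^{-1}\circ\phi\circ\ell),
\]
where the outer equalities use only associativity of composition together with $\ell\circ\ell^{-1}=\mathrm{id}$, and the middle equality is the hypothesis $\phi\circ\psi=\psi\circ\phi$.

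If one prefers to read commutativity at the level of the one-parameter families $\{\phi^{z}\}$, $\{\psi^{w}\}$ (equivalently, via vanishing of the Lie bracket of the vector fields), the same argument goes through verbatim: since $\ell$ and $\ell^{-1}$ are $1$-homogeneous, one checks $\ell^{-1}\circ\chi^{z}\circ\ell=(\ell^{-1}\circ\chi\circ\ell)^{z}$ for every $z$, so conjugation intertwines $\{\phi^{z}\}$, $\{\psi^{w}\}$ with the conjugated families and $\phi^{z}\circ\psi^{w}=\psi^{w}\circ\phi^{z}$ transports accordingly; alternatively, by Proposition~\ref{prop-trans} the vector field transforms tensorially under conjugation, being multiplied by the $0$-homogeneous factor $A=PQ^{-1}$, so the Lie bracket of the two vector fields is multiplied by $A$ and its vanishing is preserved.

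The only point worth a remark — and it is not a genuine obstacle — is that $\phi,\psi,\ell,\ell^{-1}$ are in general only generically defined rational or algebraic maps, so the displayed identities hold a priori only on a dense open subset of $\mathbb{R}^{2}$; but an equality of rational (or algebraic) maps on a dense open set is an identity, so commutativity of $\ell^{-1}\circ\phi\circ\ell$ and $\ell^{-1}\circ\psi\circ\ell$ holds wherever the relevant compositions are defined.
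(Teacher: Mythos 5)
Your main argument is correct and is precisely the computation the paper has in mind when it calls the proof ``immediate'': conjugation by a fixed invertible map preserves commutativity, and the $1$-homogeneity of $\ell$ and $\ell^{-1}$ gives $(\ell^{-1}\circ\chi\circ\ell)^{z}=\ell^{-1}\circ\chi^{z}\circ\ell$, so commutativity of the one-parameter families transports as well. One small caution about your parenthetical alternative: it is not true that the vector field (or the Lie bracket) is simply ``multiplied by $A$'' under conjugation --- formula (\ref{vecconj}) contains the extra terms $\mp A_{y}[x\varrho-y\varpi]$ and $\pm A_{x}[x\varrho-y\varpi]$, and only the combination $x\varrho'-y\varpi'$ picks up a factor $A$; the correct statement is that the vector field is pushed forward by the diffeomorphism $\ell^{-1}$, and vanishing of the bracket is preserved by the naturality $[\ell_{*}X,\ell_{*}Y]=\ell_{*}[X,Y]$, not by any scalar rescaling.
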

To find when two projective flows commute, we apply standard results from differential geometry claiming that if vector fields are smooth, this happens exactly when the Lie bracket of both vector fields vanishes \cite{conlon, gadea}. In the particular case of projective flows, we have the following result.

\begin{prop}Let $\phi(\m{x})=(u,v)$ and $\psi(\m{x})=(a,b)$ be two projective flows with smooth vector fields $(\varpi,\varrho)$ and $(\alpha,\beta)$, respectively. The flows $\phi$ and $\psi$ commute if and only if $\phi\circ\psi$ is a projective flow again. This happens exactly when 
\begin{eqnarray*}
\left\{\begin{array}{c}
\varpi_{x}\alpha+\varpi_{y}\beta=\varpi\alpha_{x}+\varrho\alpha_{y},\\
\varrho_{x}\alpha+\varrho_{y}\beta=\varpi\beta_{x}+\varrho\beta_{y}.
\end{array}
\right.
\end{eqnarray*}
\label{prop1}
\end{prop}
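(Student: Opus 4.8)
The claim to prove is Proposition~\ref{prop1}: two smooth projective flows $\phi=u\bl v$ and $\psi=a\bl b$ commute if and only if $\phi\circ\psi$ is again a projective flow, and this happens precisely when the displayed system of two PDEs in the vector fields holds.

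\smallskip

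\textbf{Plan of proof.} The plan is to pass from the functional/geometric statement to the infinitesimal one via the PDE (\ref{pde}) satisfied by each coordinate of a flow. First I would recall the standard fact from the theory of translation equations that two flows commute (i.e.\ $\phi^{z}\circ\psi^{w}=\psi^{w}\circ\phi^{z}$ for all $z,w$) if and only if $\phi\circ\psi$ satisfies the projective translation equation (\ref{funk}) together with the boundary condition (\ref{init}); this is the content of the first two sentences of the statement, and it reduces everything to showing that commutativity of the flows is equivalent to the displayed system. For this I invoke the classical differential-geometric criterion (cited in the excerpt via \cite{conlon, gadea}) that two flows generated by smooth vector fields commute if and only if the Lie bracket of the two vector fields vanishes. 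So the heart of the matter is to compute that Lie bracket explicitly for projective flows and to simplify it using the homogeneity of the vector fields and the PDE (\ref{pde}).

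\smallskip

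The key computation proceeds as follows. Let $X=\varpi\,\p_x+\varrho\,\p_y$ be the vector field of $\phi$ and $Y=\alpha\,\p_x+\beta\,\p_y$ that of $\psi$. The Lie bracket is
\begin{eqnarray*}
[X,Y]=\big(\varpi\alpha_x+\varrho\alpha_y-\alpha\varpi_x-\beta\varpi_y\big)\p_x+\big(\varpi\beta_x+\varrho\beta_y-\alpha\varrho_x-\beta\varrho_y\big)\p_y.
\end{eqnarray*}
Setting both components to zero yields exactly
\begin{eqnarray*}
\varpi_x\alpha+\varpi_y\beta=\varpi\alpha_x+\varrho\alpha_y,\qquad
\varrho_x\alpha+\varrho_y\beta=\varpi\beta_x+\varrho\beta_y,
\end{eqnarray*}
which is the displayed system. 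Here I should be a little careful about the precise dictionary between ``projective'' vector fields and ordinary vector fields: the projective flow $\phi^{z}$ is the flow at time $z$ of the ODE $\dot{\m{x}}=(\varpi-x)\bl(\varrho-y)$ rather than of $\dot{\m{x}}=\varpi\bl\varrho$ — this is visible from the PDE (\ref{pde}), whose characteristic field is $(\varpi-x)\p_x+(\varrho-y)\p_y$. So the genuinely relevant vector fields are $\widetilde X=(\varpi-x)\p_x+(\varrho-y)\p_y$ and $\widetilde Y=(\alpha-x)\p_x+(\beta-y)\p_y$, and commutativity of the flows is $[\widetilde X,\widetilde Y]=0$. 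The point I would then make is that $[\widetilde X,\widetilde Y]=[X,Y]$: the extra terms coming from the Euler field $E=x\p_x+y\p_y$ cancel because each of $X,Y$ is $2$-homogeneous, hence $[E,X]=X$ and $[E,Y]=Y$ (a $k$-homogeneous vector field satisfies $[E,\cdot]=(k-1)(\cdot)$, and here $k=2$ in the sense that the component functions are $2$-homogeneous while $\p_x,\p_y$ contribute $-1$ each, giving net weight $1$ — I would write this out carefully). Therefore $[\widetilde X,\widetilde Y]=[X,Y]-[E,Y]-[X,E]+[E,E]=[X,Y]-Y+Y=[X,Y]$, wait — I would recompute: $[\widetilde X,\widetilde Y]=[X-E,Y-E]=[X,Y]-[X,E]-[E,Y]+[E,E]=[X,Y]+[E,X]-[E,Y]=[X,Y]+X-Y$. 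So in fact the cancellation is not automatic and one must check that $[X,Y]+X-Y=0$ is equivalent to $[X,Y]=0$; this forces showing that when $[X,Y]=0$ one also has $X=Y$, which is false in general. Hence the correct route is the direct one: just compute $[\widetilde X,\widetilde Y]$ and simplify using $2$-homogeneity of $\varpi,\varrho,\alpha,\beta$ (equivalently Euler's identity $x\varpi_x+y\varpi_y=2\varpi$, etc.), and verify that all the inhomogeneous terms involving $x,y$ explicitly collapse, leaving precisely the displayed homogeneous system.

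\smallskip

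So the steps, in order, are: (1) state the equivalence ``$\phi,\psi$ commute $\iff$ $\phi\circ\psi$ is a projective flow'' as a known fact about translation equations, citing \cite{conlon, gadea}; (2) invoke the Lie-bracket criterion for commutativity of the flows of the characteristic fields $\widetilde X,\widetilde Y$; (3) expand $[\widetilde X,\widetilde Y]=0$ into two scalar equations; (4) apply Euler's identity (the $2$-homogeneity forced by (\ref{vec})) to each of $\varpi,\varrho,\alpha,\beta$ to eliminate the terms containing $x$ or $y$ as explicit factors; (5) observe that the surviving identity is exactly the displayed system, and conversely that the displayed system implies $[\widetilde X,\widetilde Y]=0$, hence commutativity. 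The one genuine obstacle is step (4): one must be scrupulous that after substituting the Euler relations the mixed terms $x\alpha_x+y\alpha_y$, $x\beta_x+y\beta_y$, $x\varpi_x+y\varpi_y$, $x\varrho_x+y\varrho_y$ and the bare terms $x,y$ all cancel in pairs. Because $2$-homogeneity gives each of these sums as twice the corresponding function, the bookkeeping is short but must be done exactly; this is presumably why the excerpt says the proof ``is immediate'' — once the homogeneity is used the cancellation is a one-line verification — and I would present precisely that one-line verification rather than grinding through it abstractly.
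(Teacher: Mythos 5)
Your first computation is already the whole proof, and it is exactly the paper's proof: the paper applies the classical criterion ``two smooth flows commute iff the Lie bracket of their generators vanishes'' to $X=\varpi\p_x+\varrho\p_y$ and $Y=\alpha\p_x+\beta\p_y$ and expands $[X,Y]=0$, which is the displayed system. The detour you then take is based on a wrong identification of the generating fields. The generator of the one-parameter group $z\mapsto\phi^{z}$ is $X$ itself: (\ref{funk}) is precisely the group law $\phi^{z+w}=\phi^{z}\circ\phi^{w}$, the boundary condition (\ref{init}) says $\phi^{0}=\mathrm{id}$, and (\ref{vec}) defines $\varpi\bl\varrho$ as $\frac{\d}{\d z}\phi^{z}(\m{x})\big|_{z=0}$. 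The field $(\varpi-x)\bl(\varrho-y)$ appears in (\ref{pde}) only because that PDE is an equation for the time-one map $u\bl v=\phi^{1}$, obtained by differentiating the translation equation in the second time parameter at $0$; it is not the infinitesimal generator of the flow (its integral flow is the rescaled reparametrization $s\mapsto e^{-s}\phi^{1-e^{-s}}(\m{x})$, not $z\mapsto\phi^{z}(\m{x})$). So the Lie-bracket criterion must be applied to $X,Y$, not to $\widetilde X,\widetilde Y$.

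Because of this, your final plan (steps (2)--(5)) fails, and it fails by your own computation: $[\widetilde X,\widetilde Y]=[X,Y]+X-Y$, so expanding $[\widetilde X,\widetilde Y]=0$ and using Euler's identity does \emph{not} make the inhomogeneous terms collapse -- the two scalar equations retain the extra summands $\varpi-\alpha$ and $\varrho-\beta$, which is neither the displayed system nor equivalent to it (in all nontrivial situations of the paper the vector fields $\varpi\bl\varrho$ and $\alpha\bl\beta$ are non-proportional, so $X\neq Y$). Having found the discrepancy $[X,Y]+X-Y$, the correct conclusion was not ``compute $[\widetilde X,\widetilde Y]$ more carefully'' but ``the tilded fields are the wrong fields.'' Delete the detour and keep the direct bracket computation with $X$ and $Y$; that one-line verification is the paper's argument.
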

All we are left is to calculate the Lie bracket $[(\varpi,\varrho),(\alpha,\beta)]$ of these two vector fields, and equate it to zero. For $n=2$ and $X=\varpi\frac{\partial}{\partial x}+\varrho \frac{\partial}{\partial y}$, $Y=\alpha\frac{\partial}{\partial x}+\beta\frac{\partial}{\partial y}$, the condition $[X,Y]=0$ and (\ref{brack}) gives the statement of the Proposition (the same system is given by Definition 1 in \cite{gine-maza}).\\
 
Let $\phi$, $\psi$ be two commuting projective flows. Since $\alpha$, $\beta$, $\varpi$, $\varrho$ are $2$-homogeneous, Euler's identity gives $x\alpha_{x}+y\alpha_{y}=2\alpha$, similarly for $\beta, \varpi,\varrho$, and so the corresponding system of Proposition \ref{prop1} can be written as 

\begin{eqnarray}
\left\{\begin{array}{r r r}
\alpha_{x}(y\varpi-x\varrho)=&(y\varpi_{x}-2\varrho)\alpha+& (2\varpi-x\varpi_{x})\beta,\\
\beta_{x}(y\varpi-x\varrho)=&y\varrho_{x}\alpha-& x\varrho_{x}\beta.\\
\end{array}
\right.
\label{sys}
\end{eqnarray}
Suppose, the pair $(\varpi,\varrho)$ is known, and the pair $(\alpha,\beta)$ is unknown. Note also that the first equation can be written as
\begin{eqnarray}
\alpha_{y}(y\varpi-x\varrho)=\varpi_{y}(y\alpha-x\beta).
\label{antra}
\end{eqnarray} 
First, suppose $y\varpi-x\varrho=0$. Then the second equation of the system (\ref{sys}) gives $\varrho_{x}(y\alpha-x\beta)=0$. Assume that $y\alpha-x\beta\neq 0$. Then $\varrho_{x}=0$, $\varrho=cy^2$. Equally, (\ref{antra}) gives $\varpi=dx^2$, and $y\varpi-x\varrho=0$ is satisfied only if $c=d=0$, hence we have an identity flow $\phi(\m{x})=(x,y)$. If this is not the case, we must necessarily have $y\alpha-x\beta=0$, and this proves Proposition \ref{level-0}. Henceforth we may assume $y\varpi-x\varrho\neq 0$, $y\alpha-x\beta\neq 0$.\\
  
Recall again that in (\ref{sys}), the pair $(\alpha,\beta)$ is unknown and is to be determined.  One solution of this system is $(\varpi,\varrho)$. Fix $(\alpha,\beta)$ as a linearly independent solution. The trace of this linear system of ODEs is equal to
\begin{eqnarray*}
T(x,y)=\frac{2\varrho+x\varrho_{x}-y\varpi_{x}}{x\varrho-y\varpi}=\frac{\varrho}{x\varrho-y\varpi}+
\frac{\d}{\d x}\ln(x\varrho-y\varpi).
\end{eqnarray*}
It is a $(-1)$-homogeneous function. Using the differential equation (\ref{orbits}), we obtain that the Wr\'{o}nskian of the above system is equal to, according to Liouville's formula,
\begin{eqnarray}
\alpha\varrho-\beta\varpi=\exp\bigg{(}\int T(x,y)\d x\bigg{)}=\mathscr{W}(x,y)\cdot(x\varrho-y\varpi).
\label{wronskian}
\end{eqnarray}
In fact, while integrating, we keep in mind that $x$ is variable, $y$ is constant, only we make sure that the obtained functions have the same degree of homogeneity: indeed, the left hand side is $4$-homogeneous, $(x\varrho-y\varpi)$ is $3$-homogeneous, and $\mathscr{W}(x,y)$ is $1$-homogeneous. So, (\ref{wronskian}) holds up to a factor of a $0$-homogeneous function in $y$; hence, a constant factor. Of course, if the equation $\mathscr{W}(x,y)=\mathrm{const.}$ defines orbits for the flow $\phi$, so does $c\mathscr{W}(x,y)=\mathrm{const.}$ for $c\neq 0$. \\

From symmetry considerations, if $\mathscr{V}(x,y)$ are orbits for the projective flow with a vector field $(\alpha,\beta)$, we get
\begin{eqnarray}
\beta\varpi-\alpha\varrho=\mathscr{V}(x,y)\cdot(x\beta-y\alpha).
\label{wronskian2}
\end{eqnarray}
This, together with (\ref{wronskian}), gives the first crucial corollary:
\begin{itemize}
\item[$\star$]\emph{If two projective flows with rational vector fields commute}, $x\varrho-y\varpi\neq 0$, $x\beta-y\alpha\neq 0$, {they are both abelian flows of level} $1$.
\end{itemize}
From (\ref{wronskian}), expressing $\alpha$ and substituting into the second equation of (\ref{sys}), we obtain:
\begin{eqnarray*}
\beta_{x}(y\varpi-x\varrho)=y\varrho_{x}\frac{\beta\varpi}{\varrho}+
y\varrho_{x}\frac{x\varrho-y\varpi}{\varrho}\mathscr{W}
-x\varrho_{x}\beta.
\end{eqnarray*}
Or, simplifying, we obtain a non-homogeneous first order linear ODE for the function $\beta$:  
\begin{eqnarray}
\beta_{x}\varrho=\varrho_{x}\beta-
y\varrho_{x}\mathscr{W}.
\label{ess}
\end{eqnarray}
If $\varrho\neq 0$, solving this linear ODE, we obtain a solution
\begin{eqnarray}
\beta=-\varrho\int\frac{y\varrho_{x}\mathscr{W}}{\varrho^{2}}\d x.
\label{beta-ok}
\end{eqnarray}
(The constant of integration is a $0$-homogeneous function in $y$; hence, a constant). We will not need this formula in the proof of the main Theorem, but it has an advantage that if a vector field $(\varpi,\varrho)$ is known, so we know equation for the orbits $\mathscr{W}$, then the above allows to find uniquely $\beta$, up to a summand proportional to $\varrho$; this will be used in Section \ref{examples} where several examples are given. This also shows that if $\varrho\neq 0$, there exists one vector field $(\alpha,\beta)$ such that all vector fields that commute with $(\varpi,\varrho)$ are given by $z(\varpi,\varrho)+w(\alpha,\beta)$. The same conclusion follows if $\varpi\neq 0$. Of course, $(\varpi,\varrho)=(0,0)$ holds only for the identity flow $(x,y)$. \\

Now we will make one significant simplification, minding that the flows are abelian flows of level $1$. Let us define
\begin{eqnarray*}
A(x,y)=\frac{y}{\mathscr{W}(x,y)}.
\end{eqnarray*}
This is a $0$-homogeneous function.  We know that $\mathscr{W}$ satisfies (\ref{orbits}). Now, consider a $1$-BIR given by $\ell(x,y)=(xA,yA)$. Let us use Proposition \ref{prop-trans}. This shows that a second coordinate of the flow $\ell^{-1}\circ\phi\circ\ell$ with rational vector field is identically equal to $0$. Here we used essentially the fact that the orbits are given by $1$-homogeneous rational functions. \\

Thus, let now consider two commuting flows $\ell^{-1}\circ\phi\circ\ell$ and $\ell^{-1}\circ\psi\circ\ell$. If we are interested in these flows up to $1$-BIR conjugacy, we can, without the loss of generality, consider $\varrho=0$, $\varpi\neq 0$, $(u,v)=(u(x,y),y)$. \\

In this case, the system of Proposition \ref{prop1} gives:
\begin{eqnarray*}
\left\{\begin{array}{c}
\varpi_{x}\alpha+\varpi_{y}\beta=\varpi\alpha_{x},\\
\beta_{x}=0.
\end{array}
\right.
\end{eqnarray*}If $\beta=0$, this gives $\alpha=C\varpi$, and we know that the flow commutes with itself. So let, without the loss of generality, take $\beta=-y^2$. So, $(a,b)=(a,\frac{y}{y+1})$. But then the flow conservation property gives (\ref{a-def}); that is,
\begin{eqnarray*}
\mathscr{V}\Big{(}a,\frac{y}{y+1}\Big{)}=\mathscr{V}(x,y).
\end{eqnarray*}
(See also \cite{alkauskas-2}). Therefore, $a$ is an algebraic function, and $(a,b)$ is an algebraic flow. Yet again from symmetry considerations, $(u,v)$ is also an algebraic flow. Indeed, $1$-BIR conjugation does not impact on property of the flow being algebraic. Hence we have proved the first part of the main Theorem \ref{thm-main}.\\

We will prove the second part and at the same show how to practically produce commuting algebraic flows. As we know from \cite{alkauskas-2}, any algebraic flow is $1$-BIR conjugate to the flow $(a(x,y),\frac{y}{y+1})$ for $a$ algebraic. Now, let $\mathscr{V}(x,y)\neq cy$ be any $1$-homogeneous rational function. Let us define $a(x,y)$ from the equation (\ref{a-def}). Then, if we choose the correct branch (as explained after the Theorem), $(a(x,y),\frac{y}{y+1})$ is a projective flow. Its vector field $(\alpha,\beta)$ satisfies $\mathscr{V}_{x}\alpha+\mathscr{V}_{y}\beta=0$, and so is given by
\begin{eqnarray}
(\alpha,\beta)=\Big{(}\frac{y^2\mathscr{V}_{y}}{\mathscr{V}_{x}},-y^2\Big{)}=
\Big{(}\frac{y\mathscr{V}-xy\mathscr{V}_{x}}{\mathscr{V}_{x}},-y^2\Big{)}=
\Big{(}\frac{y\mathscr{V}}{\mathscr{V}_{x}}-xy,-y^2\Big{)}.
\label{taip}
\end{eqnarray} 
Let now a non-proportional vector field $(\varpi,\varrho)$ commutes with $(\alpha,\beta)$. Similarly as (\ref{ess}), we can prove
\begin{eqnarray*}
\beta\varrho_{x}=\varrho\beta_{x}-
y\beta_{x}\mathscr{V}.
\end{eqnarray*}
This follows easily in the same way (express $\alpha$ from (\ref{wronskian2}) and plug into the second equation of (\ref{sys})), or just due to symmetry considerations, minding the ODE (\ref{ess}). Now, $\beta=-y^2$, and this gives $\varrho_{x}=0$. This implies $\varrho=cy^2$. Next, the vector field $(\varpi,\varrho)+c(\alpha,\beta)$ also commutes with $(\alpha,\beta)$, so we may assume, without the loss of generality, $\varrho=0$.\\

Now, we can find $\varpi$ from (\ref{wronskian2}) and (\ref{taip}):
\begin{eqnarray}
\varpi=\frac{\mathscr{V}(x\beta-y\alpha)+\alpha\varrho}{\beta}=\mathscr{V}\cdot\Big{(}x+\frac{\alpha}{y}\Big{)}\mathop{=}^{(\ref{taip})}\mathscr{V}\cdot\Big{(}x+\frac{\mathscr{V}}{\mathscr{V}_{x}}-x\Big{)}=\frac{\mathscr{V}^2}{\mathscr{V}_{x}}.
\label{varpi}
\end{eqnarray}
This gives a practical way to produce algebraic projective flows which commute. Moreover, we can finish integrating the vector field $(\varpi,\varrho)$ in explicit terms, since we have at our disposition the method to integrate any vector field $(\varpi,0)$. Its integral is a flow $(u(x,y),y)$, where $u$ is found from the equation (\cite{alkauskas}, p. 307)
\begin{eqnarray*}
\int\limits^{\frac{y}{x}}_{\frac{y}{u(x,y)}}\frac{\d t}{\varpi(1,t)}=y.
\end{eqnarray*} 
In this integral, let us make a change $t\mapsto\frac{1}{t}$. Since $\varpi$ is $2$-homogeneous, this gives
\begin{eqnarray*}
\int\limits^{\frac{u(x,y)}{y}}_{\frac{x}{y}}\frac{\d t}{\varpi(t,1)}=y.
\end{eqnarray*} 
Now, let us use (\ref{varpi}). This gives
\begin{eqnarray*}
\int\frac{\d t}{\varpi(t,1)}=\int\limits\frac{\mathscr{V}_{x}(t,1)\d t}{\mathscr{V}^{2}(t,1)}=-\frac{1}{\mathscr{V}(t,1)}.
\end{eqnarray*}
So,
\begin{eqnarray*}
\frac{1}{\mathscr{V}(\frac{x}{y},1)}-\frac{1}{\mathscr{V}(\frac{u}{y},1)}=y.
\end{eqnarray*}
Since $\mathscr{V}$ is $1$-homogeneous, this finally gives the equation for $u$, as given by (\ref{u-def}); that is,
\begin{eqnarray*}
\frac{\mathscr{V}(x,y)}{1-\mathscr{V}(x,y)}=\mathscr{V}(u,y).
\end{eqnarray*}  
Thus, this gives the explicit formulas in Theorem \ref{thm-main}. Also, we can double-check that a vector field is the correct one. Indeed, the last equation can be rewritten as, after plugging $(x,y)\mapsto(xz,yz)$ and dividing by $z$,
\begin{eqnarray*}
\frac{\mathscr{V}(x,y)}{1-z\mathscr{V}(x,y)}=\mathscr{V}\Big{(}\frac{u(xz,yz)}{z},y\Big{)}.
\end{eqnarray*} 

Now differentiation with respect to $z$ and afterwards substitution $z=0$ gives, minding the formula (\ref{vec}), the correct value for the first coordinate of the vector field, given by (\ref{varpi}).\\

Finally, a vector field of the flow $\phi^{z}\circ\psi^{w}$ is equal to
\begin{eqnarray*}
z(\varpi,\varrho)+w(\alpha,\beta)=\Big{(}\frac{z\mathscr{V}^2}{\mathscr{V}_{x}}+\frac{wy\mathscr{V}}{\mathscr{V}_{x}}-wxy,-wy^2\Big{)}=
(\widehat{\varpi},\widehat{\varrho}).
\end{eqnarray*}
Let $\widehat{\mathscr{W}}$ be the equation for the orbits of this flow. We are left to solve the ODE (\ref{orbits}) for a vector field $(\widehat{\varpi},\widehat{\varrho})$. In this case, it reads
\begin{eqnarray*}
\frac{\widehat{\mathscr{W}_{x}}}{\widehat{\mathscr{W}}}=
\frac{wy\mathscr{V}_{x}}{z\mathscr{V}^2+wy\mathscr{V}}=
\frac{\mathscr{V}_{x}}{\mathscr{V}}-\frac{\mathscr{V}_{x}}{\mathscr{V}+\frac{wy}{z}}.
\end{eqnarray*}
Thus,
\begin{eqnarray*}
\widehat{\mathscr{W}}=\frac{\mathscr{V}y}{z\mathscr{V}+wy}.
\end{eqnarray*}
While integrating, we keep in mind that $\widehat{\mathscr{W}}$ is $1$-homogeneous function in $(x,y)$, so integration constant is chosen to be $\ln y-\ln z$. For $(z,w)=(1,0)$ we recover orbits of the flow $\phi$ ($y=\mathrm{const.}$), and for $(z,w)=(0,1)$ we get orbits of the flow $\psi$ ($\mathscr{V}(x,y)=\mathrm{const}$). This finishes the proof of Theorem \ref{thm-main}. 
\section{Examples}
\label{examples}
\subsection{Monomials}The simplest case of a $1$-homogeneous function $\mathscr{V}$ in Theorem \ref{thm-main} is a monomial. So let, in the setting of second half of Theorem \ref{thm-main},
\begin{eqnarray*}
\mathscr{V}(x,y)=x^{n+1}y^{-n},\quad n\in\mathbb{Z}\setminus\{-1\}.
\end{eqnarray*}
This gives
\begin{eqnarray*}
\psi(\m{x})=\Big{(}x(y+1)^{-\frac{n}{n+1}},\frac{y}{y+1}\Big{)},\quad \phi(\m{x})=
\left(\frac{x}{(1-x^{n+1}y^{-n})^{\frac{1}{n+1}}}\,, y\right).
\end{eqnarray*}

We can check by hand that these two are indeed commutative flows; that is, they satisfy (\ref{funk}), and do commute. 
\subsection{Superflow}The flow
\begin{eqnarray*}
\phi(\m{x})=\big{(}x+(x-y)^2,y+(x-y)^2\big{)}
\end{eqnarray*} 
is rational, hence algebraic, and its orbits are curves $x-y=\mathrm{const}$. We give this particular example because this flow has many fascinating properties, related to finite linear groups, infinite linear groups and their Lie algebras. This flow is the simplest example of a reducible $2$-dimensional superflow \cite{alkauskas-super1, alkauskas-super2,alkauskas-super3}. More precisely, the flow has a $6$-fold cyclic symmetry generated by the order $6$ matrix
\begin{eqnarray*}
\gamma=\begin{pmatrix}
\zeta & 0\\
\zeta+\zeta^{-1}&-\zeta^{-1}
\end{pmatrix},\quad \zeta=e^{\frac{2\pi i}{3}}.
\end{eqnarray*}
This show that a flow is a superflow. The full group of symmetries of this flow is infinite \cite{alkauskas-super3}. Since orbits are $1$-homogeneous functions, we can apply the main Theorem. Thus, let $(\varpi,\varrho)=((x-y)^2,(x-y)^2)$, and $\mathscr{W}=x-y$. Formula (\ref{beta-ok}) gives $\beta=2xy-y^2$, and formula (\ref{wronskian}) gives $\alpha=x^2-y^2$. This vector field can be easily integrated using methods from \cite{alkauskas, alkauskas-2}, and the flow we obtain is given by
\begin{eqnarray*}
\psi(\m{x})=\bigg{(}\frac{x-(x-y)^2}{(x-y-1)^2},\frac{y}{(x-y-1)^2}\bigg{)}.
\end{eqnarray*}  
By a direct calculation, these two flows indeed commute, since
\begin{eqnarray*}
\psi^{w}\circ\phi^{z}(\m{x})=\bigg{(}\frac{x+(z-w)(x-y)^2}{(wx-wy-1)^2},\frac{y+z(x-y)^2}{(wx-wy-1)^2}\bigg{)}=\phi^{z}\circ\psi^{w}(\m{x}).
\end{eqnarray*}
\subsection{A quadratic example} Let $(\varpi,\varrho)=(2x^2-3xy, xy-2y^{2})$. This vector field satisfies the condition of Proposition \ref{prop-crit}. In fact, in \cite{alkauskas-ab} we classified all pairs of quadratic forms which produce algebraic flows (see also correcting remarks in \cite{alkauskas-2}), and this particular case corresponds to a pair $(n,Q)=(1,-\frac{1}{2})$. Since the numerator of $Q$ is $1$, this is a flow of level $1$. Thus, we have
\begin{eqnarray*}
\mathscr{W}(x,y)=x^{-2}(x-y)y^{2},\quad(\alpha,\beta)&=&\Big{(}\frac{y^3}{x},\frac{y^{3}}{x}\Big{)}.
\end{eqnarray*}
The system of Proposition \ref{prop1} is satisfied. The method to integrate vector fields with both coordinates proportional is developed in (\cite{alkauskas}, Subsection 4.2). The integral of the vector field $(\alpha,\beta)$ is the flow
\begin{eqnarray*}
(a,b)=\Bigg{(}\frac{(x-y)\sqrt{x^2-2xy^2+2y^{3}}}{\sqrt{x^2-2xy^2+2y^{3}}-y},\frac{xy-y^{2}}{\sqrt{x^2-2xy^2+2y^{3}}-y}\Bigg{)},\quad \mathscr{V}(x,y)=x-y.
\end{eqnarray*}
Again we double-check that initial conditions (\ref{init}) and PDEs (\ref{pde}) are satisfied. To find $u,v$ we use algebraic identities of Theorem in \cite{alkauskas-2}. This gives
\begin{eqnarray*}
\frac{(u-v)v^{2}}{u^{2}}=\frac{(x-y)y^{2}}{x^{2}},\quad
\frac{1}{v}+\frac{u}{v^2}=\frac{1}{y}+\frac{x}{y^2}+2.
\end{eqnarray*}
Some handwork gives the solution
\begin{eqnarray*}
(u,v)=\left(\frac{xy\sqrt{1-2x+2y}+x^2}{(x+y+2y^2)(1-2x+2y)},\frac{y^2\sqrt{1-2x+2y}+xy}{(x+y+2y^2)\sqrt{1-2x+2y}}\right).
\end{eqnarray*}  
Once again, we double check that the PDEs (\ref{pde}) and initial conditions (\ref{init}) are satisfied. It is not that straightforward to check that indeed, these two flows do commute! As mentioned in the introduction, MAPLE confirms this. For the convenience of the reader, MAPLE code to verify these claims are contained in \cite{prop-symb}. Thus,
as a particular example, we prove the following.
\begin{prop}The flow
\begin{eqnarray*}
\phi(\m{x})=\left(\frac{xy\sqrt{1-2x+2y}+x^2}{(x+y+2y^2)(1-2x+2y)}
,\frac{y^2\sqrt{1-2x+2y}+xy}{(x+y+2y^2)\sqrt{1-2x+2y}}\right),
\end{eqnarray*}
with the vector field  $(2x^2-3xy,xy-2y^{2})$ and orbits $u^{-2}(u-v)v^2=\mathrm{const.}$, and the flow
\begin{eqnarray*}
\psi(\m{x})=\left(\frac{(x-y)\sqrt{x^2-2xy^2+2y^{3}}}{\sqrt{x^2-2xy^2+2y^{3}}-y},
\frac{xy-y^{2}}{\sqrt{x^2-2xy^2+2y^{3}}-y}\right),
\end{eqnarray*}
with the vector field $(\frac{y^3}{x},\frac{y^3}{x})$ and orbits $u-v=\mathrm{const.}$, commute. All projective flows which commute with $\phi$ are given by $\phi^{z}\circ\psi^{w}$, $z,w\in\mathbb{R}$. The orbits of the flow $(U,V)=\phi^{z}\circ\psi^{w}$ are given by
\begin{eqnarray*}
\frac{(U-V)V^2}{zU^2-wV^2}=\frac{(x-y)y^2}{zx^2-wy^2}=\mathrm{const.}
\end{eqnarray*}
\label{prop-ex1}
\end{prop}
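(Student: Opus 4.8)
The plan is to derive this proposition as a completely worked instance of the main Theorem, starting from the rational flow $\phi$ and manufacturing its commuting partner $\psi$ at the level of vector fields. First I would take the vector field $\varpi\bl\varrho=(2x^2-3xy)\bl(xy-2y^2)$ of $\phi$ and solve the orbit equation (\ref{orbits}); it produces the rational $1$-homogeneous function $\mathscr{W}(x,y)=x^{-2}(x-y)y^2$, which already shows $\phi$ is an algebraic flow of level $1$ --- this is the case $(n,Q)=(1,-\tfrac12)$ of the classification of algebraic quadratic vector fields in \cite{alkauskas-ab}, and the numerator of $Q$ being $1$ pins the level. Then I would feed $\varrho$ and $\mathscr{W}$ into (\ref{beta-ok}) to get $\beta$ and recover $\alpha$ from the Wr\'{o}nskian identity (\ref{wronskian}); after simplification this is the vector field $\alpha\bl\beta=\frac{y^3}{x}\bl\frac{y^3}{x}$, with proportional coordinates and orbit function $\mathscr{V}(x,y)=x-y$. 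A direct substitution into the system of Proposition \ref{prop1} verifies that $\varpi\bl\varrho$ and $\alpha\bl\beta$ commute, and they are plainly non-proportional.

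The next step is to integrate the two vector fields into explicit algebraic flows. Since $\alpha\bl\beta$ has proportional coordinates, I would integrate it by the method of \cite{alkauskas}, Subsection 4.2, obtaining the displayed formula for $\psi=a\bl b$; to integrate $\varpi\bl\varrho$ I would invoke the algebraic identities attached to a level $N$ flow in the main theorem of \cite{alkauskas-2}, which here read $\frac{(u-v)v^2}{u^2}=\frac{(x-y)y^2}{x^2}$ (orbit conservation $\mathscr{W}(u,v)=\mathscr{W}(x,y)$) and $\frac{1}{v}+\frac{u}{v^2}=\frac{1}{y}+\frac{x}{y^2}+2$, and then solve this pair for $(u,v)$, choosing the branches of the radicals so that the boundary conditions (\ref{bound}) hold. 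With the closed forms of $\phi$ and $\psi$ in hand I would check directly that each satisfies the PDE (\ref{pde}) together with (\ref{bound}), confirming that both are genuine projective flows with the stated vector fields.

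It then remains to establish commutativity, maximality, and the orbit formula for $\phi^z\circ\psi^w$. Commutativity of $\phi$ and $\psi$ is already forced by the vanishing of their Lie bracket: both fields are rational, hence smooth, so Proposition \ref{prop1} together with the standard differential-geometry fact of \cite{conlon, gadea} gives that the flows commute; a symbolic verification in MAPLE (code in \cite{prop1}) provides an independent check and also yields the explicit composition $\phi^z\circ\psi^w$. That every projective flow commuting with $\phi$ is of the form $\phi^z\circ\psi^w$, $z,w\in\mathbb{C}$, is exactly the second assertion of the main Theorem applied to the level $1$ algebraic flow $\phi$, the flow $\psi$ just constructed being the non-proportional partner the Theorem supplies. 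Finally, for the orbits of $U\bl V=\phi^z\circ\psi^w$, whose vector field is $z(\varpi\bl\varrho)+w(\alpha\bl\beta)$, I would solve the ODE (\ref{orbits}) for that field --- most efficiently by proposing $\widehat{\mathscr{W}}(x,y)=\frac{(x-y)y^2}{zx^2-wy^2}$, checking it solves (\ref{orbits}) and is $1$-homogeneous, and noting it specializes to $\mathscr{W}$ at $(z,w)=(1,0)$ and, up to a constant, to $\mathscr{V}$ at $(z,w)=(0,1)$ --- whereupon conservation $\widehat{\mathscr{W}}(U,V)=\widehat{\mathscr{W}}(x,y)$ is precisely the claimed identity $\frac{(U-V)V^2}{zU^2-wV^2}=\frac{(x-y)y^2}{zx^2-wy^2}$.

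The main obstacle is the explicit integration of $\varpi\bl\varrho$: solving the two algebraic relations for $u$ and $v$ in closed form is the ``handwork'' that forces out the cumbersome radicals $\sqrt{1-2x+2y}$ and $\sqrt{x^2-2xy^2+2y^3}$ appearing in $\phi$ and $\psi$, and a hand verification that these two unwieldy algebraic maps commute would be painful --- which is exactly why a computer-algebra check is used, even though commutativity is in principle already guaranteed by the vanishing of the Lie bracket.
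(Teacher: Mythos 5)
Your proposal is correct and follows essentially the same route as the paper: compute $\mathscr{W}=x^{-2}(x-y)y^{2}$, obtain $\alpha\bl\beta=\frac{y^{3}}{x}\bl\frac{y^{3}}{x}$ via (\ref{beta-ok}) and (\ref{wronskian}), verify the system of Proposition \ref{prop1}, integrate $\alpha\bl\beta$ by the proportional-coordinates method and $\varpi\bl\varrho$ through the algebraic identities $\mathscr{W}(u,v)=\mathscr{W}(x,y)$, $\frac1v+\frac{u}{v^2}=\frac1y+\frac{x}{y^2}+2$, check (\ref{pde}) and (\ref{init}), and settle maximality and the orbit formula for $\phi^{z}\circ\psi^{w}$ via the main Theorem and the ODE (\ref{orbits}), with MAPLE confirming the explicit commutation. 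The only cosmetic difference is that you present the Lie-bracket computation as already forcing commutativity with MAPLE as an independent check, whereas the paper leans on the MAPLE verification for the explicit algebraic maps; both are consistent with Proposition \ref{prop1}.
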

The statement that the orbits are, for example, $u^{-2}(u-v)v^2=\mathrm{const.}$, is a slight abuse of notation which means the following. Let $u(xz,yz)z^{-1}=u^{z}$, $v(xz,yz)z^{-1}=v^{z}$. What we mean is that
\begin{eqnarray*}
(u^{z})^{-2}(u^{z}-v^{z})(v^{z})^{2}\equiv x^{-2}(x-y)y^{2},
\end{eqnarray*}
and is independent of $z$.
The flow $\phi$ is locally well-defined - if $(x,y)$ is replaced with $(zx,zy)$, where $z$ is sufficiently small, we take the branch of the square root which is equal to $x$ at $z=0$. For the flow $\psi$ we assume
\begin{eqnarray*}
\sqrt{x^2-2xy^2+2y^{3}}=x\sqrt{1-\frac{2y^2}{x}+\frac{2y^{3}}{x^2}},
\end{eqnarray*} 
and similar convention holds for $(x,y)\mapsto(zx,zy)$. 
 
\subsection{A cubic example}Consider the vector field $(2x^2+xy,xy+2y^2)$. Yet again, this vector field satisfies the conditions of Proposition \ref{prop-crit} (MAPLE does it for us), so it produces algebraic flow of level $1$. Indeed, in the setting of (\cite{alkauskas-ab}, Theorem 3), $(n,Q)=(1,\frac{1}{2})$. Moreover, this vector field is symmetric with respect to conjugation with a $1$-BIR involution $i(x,y)=(y,x)$, and so is the flow. \\

Let $A=\frac{y(y-3x)}{6x^2}$. Then formulas in Proposition \ref{conjug} give a vector field
\begin{eqnarray*}
\bigg{(}\frac{4xy^2-y^3-9x^2y}{6x},-y^2\bigg{)}.
\end{eqnarray*} 
The orbits of the flow with the latter vector field are given by
\begin{eqnarray*}
\mathscr{V}(x,y)=\frac{(3x-y)y^3}{(x-y)^3}=\mathrm{const.}
\end{eqnarray*}
So, for $\mathscr{V}$ is given as above, we are now in the position of the second part of Theorem \ref{thm-main}, and can give the final answer in terms of Cardano formulas. Returning back to the vector field $(2x^2+xy, xy+2y^2)$, that is, performing backwards $1$-BIR $(xA^{-1},yA^{-1})$, gives the following result.
\begin{prop}
\label{prop-7}
The flow $\phi(x,y)=\big{(}u(x,y),v(x,y)\big{)}$, where
\begin{eqnarray*}
u(x,y)=\frac{\Bigg{(}\sqrt[3]{x+y\sqrt{\frac{y-3x+6x^2}{x-3y+6y^2}}}+\sqrt[3]{x-y\sqrt{\frac{y-3x+6x^2}{x-3y+6y^2}}}\Bigg{)}x(x-y)}{\sqrt[3]{x-3y+6y^2}\cdot(y-3x+6x^2)}
-\frac{2x^2}{y-3x+6x^2},
\end{eqnarray*}
and $v(x,y)=u(y,x)$, with the vector field $(2x^2+xy,xy+2y^2)$, and orbits $u^2(u-v)^{-3}v^2=\mathrm{const.}$, and the flow $\psi(\m{x})=\big{(}a(x,y),b(x,y)\big{)}$, where the third degree algebraic functions $a,b$ are found from
\begin{eqnarray*}
\frac{(9ax-8x^2-3ay)x^2y^2}{a(ay-3ax+3x^2)^2}=x-3y-6y^2,
\quad b=\frac{a^2(y-3x)}{x^2}+3a,
\end{eqnarray*}
(we choose the branch of the function $``a"$ which satisfies the boundary condition) with the vector field  
\begin{eqnarray}
(\alpha,\beta)=\left(-\frac{xy^3}{(x-y)^2},\frac{3xy^3-2y^4}{(x-y)^2}\right),
\label{al-be}
\end{eqnarray}
 and orbits $\frac{a^2}{3a-b}=\mathrm{const.}$, commute. All projective flows which commute with $\phi$ are given by $\phi^{z}\circ\psi^{w}$, $z,w\in\mathbb{R}$. The orbits of the flow $(U,V)=\phi^{z}\circ\psi^{w}$ are given by
\begin{eqnarray*}
\frac{U^{2}V^{2}}{z(V-U)^{3}+wV^{2}(3U-V)}
=\frac{x^{2}y^{2}}{z(y-x)^{3}+wy^{2}(3x-y)}=\mathrm{const.}
\end{eqnarray*}
\label{prop-ex2}
\end{prop}
The MAPLE code which again checks the last Proposition can be found at \cite{prop-symb}. As mentioned, $v(x,y)=u(y,x)$. MAPLE formally verifies that the PDE (\ref{pde}) is satisfied without even specifying which of the branches of radicals we are using. Also the flow conservation property is satisfied. However, it is known that if a third degree polynomial has three distinct real roots, then Cardano formulas must involve complex numbers (\cite{waerden}, chapter on Galois theory). So, the choice of the branch and thus making sure that the boundary conditions (\ref{init}) are satisfied are not so explicit if one uses the expression for $u$ given in Proposition \ref{prop-7}. Numerical computations show that for $x-3y>0$, $y-3x>0$, the boundary conditions are satisfied if we use positive value for the square root and real values for cubic roots, due to an algebraic identity:
\begin{eqnarray*}
\lim\limits_{z\rightarrow 0}\frac{u(xz,yz)}{z}=\frac{\Bigg{(}\sqrt[3]{x+y\sqrt{\frac{y-3x}{x-3y}}}+\sqrt[3]{x-y\sqrt{\frac{y-3x}{x-3y}}}\Bigg{)}x(x-y)}{\sqrt[3]{x-3y}\cdot(y-3x)}-\frac{2x^2}{y-3x}\equiv x.
\end{eqnarray*}  
Of course, such identities should not come as a surprise: they arise all the time when a cubic polynomial has a root we know in advance - for example, write Cardano formulas for a polynomial $(x-1)(x^2+px+q)$. \\

The algebraic function $a(x,y)$ satisifes the third degree equation
\begin{eqnarray}
F(a,x,y):=(9ax-8x^2-3ay)x^2y^2+(3y+6y^2-x)a(ay-3ax+3x^2)^2=0,
\label{impli}
\end{eqnarray}
and the function $b$ is given by
\begin{eqnarray*}
b=\frac{a^2(y-3x)}{x^2}+3a;
\end{eqnarray*}
the latter follows from the flow conservation property. The polynomial $F(a,x,y)$ is irreducible over $\mathbb{C}[a,x,y]$. To double-check that the integral of the vector field $(\alpha,\beta)$ is $(a,b)$, we will now verify the boundary condition (\ref{init}) and the PDE (\ref{pde}).
Let $a^{z}(x,y)=\frac{a(xz,yz)}{z}$. Then putting $(x,y)\mapsto(xz,yz)$ in (\ref{impli}), we obtain:
\begin{eqnarray*}
(9a^{z}x-8x^2-3a^{z}y)x^2y^2=(x-3y-6y^2z)a^{z}(a^{z}y-3a^{z}x+3x^2)^2.
\end{eqnarray*}
Since for $z=0$, $a^{0}=x$ satisfies the above, we choose the branch for $a^{z}$ such that for $z=0$, $a^{z}=x$.  We are left to verify the PDE for $a$:
\begin{eqnarray*}
a_{x}(x,y)(\alpha(x,y)-x)+a_{y}(x,y)(\beta(x,y)-y)=-a.
\end{eqnarray*}
From (\ref{impli}), we have
\begin{eqnarray*}
a_{x}=-\frac{F_{x}}{F_{a}},\quad a_{y}=-\frac{F_{y}}{F_{a}}.
\end{eqnarray*}
So, the following as if must hold:
\begin{eqnarray*}
-\frac{F_{x}(a,x,y)}{F_{a}(a,x,y)}\cdot\big{(}\alpha(x,y)-x\big{)}
-\frac{F_{y}(a,x,y)}{F_{a}(a,x,y)}\cdot\big{(}\beta(x,y)-y\big{)}+a\mathop{=}^{?}0.
\end{eqnarray*}
However, calculations with MAPLE show that the left hand side of the above, which is a rational function in three variables $a,x,y$, is not identically zero - it is quite a lengthy expression that involves the powers of $a$ up to $a^{5}$. Nevertheless, $a$ is algebraically dependent on $x,y$, and the numerator can be reduced. And indeed, MAPLE confirms that
\begin{eqnarray*}
-\frac{F_{x}}{F_{a}}(\alpha-x)-\frac{F_{y}}{F_{a}}(\beta-y)+a\equiv0\text{ mod }F(a,x,y).
\end{eqnarray*}
Note that we have already encountered an analogous phenomenon in \cite{alkauskas-un} while dealing with elliptic flows. So, this implicitly verifies the PDE (\ref{pde}).\\

One last remark. The vector field $(\varpi,\varrho)=(2x^2+xy,xy+2y^2)$ is invariant under conjugation with a linear involution $i_{0}(x,y)=(y,x)$. This shows that if a vector field $(\alpha,\beta)$, given by (\ref{al-be}), commutes with $(\varpi,\varrho)$, so does a vector field $i_{0}\circ(\alpha,\beta)\circ i_{0}(x,y)$. However, this vector field is not equal to $(\alpha,\beta)$. There is no contradiction to uniqueness property, since we know that there must exist $c,d\in\mathbb{R}$ such that
\begin{eqnarray*}
c(2x^2+xy,xy+2y^2)+d\bigg{(}-\frac{xy^3}{(x-y)^2},\frac{3xy^3-2y^4}{(x-y)^2}\bigg{)}=
\bigg{(}\frac{3x^3y-2x^4}{(x-y)^2},-\frac{x^3y}{(x-y)^2}\bigg{)}.
\end{eqnarray*}
This indeed holds for $c=d=-1$.

\section{Higher dimensions}
\label{high-dim}
The problem of describing various aspects of projective flows in dimension $n\geq 3$, starting from continuous (not necessarily smooth) flows on a single point compactification of $\mathbb{R}^{n}$, symmetry, quasi-flows, rational flows, flows over finite fields, abelian, algebraic and integral flows, are all open. See \cite{alkauskas-un, alkauskas-ab} for a list of 10 problems. This list is continued in \cite{ alkauskas-2, alkauskas-super1,alkauskas-super2,alkauskas-super3}; note that the theory of superflows in dimension $3$ is close to a finish. In relation to the topic of the current paper, in Subsection \ref{sub-comm} we posed another problem, which we will now strengthen.\\

Based on results in dimension $2$, we can construct families of $n$ pairwise commuting projective flows with rational vector fields. Indeed, we will illustrate this in dimension $3$, and the same construction - direct sum of flows - works in any dimension.
\subsection{Extension of a commuting pair of flows}Let $\phi=(u,v)$ and $\psi=(a,b)$ be $2$-dimensional commuting algebraic flows. Then let us define the set of flows
 \begin{eqnarray}
\left\{\begin{array}{l}
\phi(x,y,z)=\big{(}u(x,y),v(x,y),z\big{)},\\
\psi(x,y,z)=\big{(}a(x,y),b(x,y),z\big{)},\\
\xi(x,y,z)=\big{(}x,y,\frac{z}{1-z}\big{)}.\\
\end{array}
\label{a-bit-more}
\right.
\end{eqnarray}
(Here $z$, of course, is no longer a ``time" parameter. The use of the same notation $``\phi"$ and $``\psi"$ for $2$-dimensional flows and their $3$-dimensional extensions should not cause a confusion). These three are algebraic pairwise commuting flows. In particular, as the most basic example, let us consider
 \begin{eqnarray*}
\left\{\begin{array}{l}
\phi(x,y,z)=\big{(}\frac{x}{1-x},y,z\big{)},\\
\psi(x,y,z)=\big{(}x,\frac{y}{1-y},z\big{)},\\
\xi(x,y,z)=\big{(}x,y,\frac{z}{1-z}\big{)}.\\
\end{array}
\right.
\end{eqnarray*}
A very simple argument shows that this is the maximal collection - any flow which commutes with all three is necessarily of the form
\begin{eqnarray*}
\eta(x,y,z)=\Big{(}\frac{x}{1-px},\frac{y}{1-qy},\frac{z}{1-rz}\Big{)}=\phi^{p}\circ\psi^{q}\circ\xi^{r},\quad p,q,r\in\mathbb{R}\text{ are fixed}.
\end{eqnarray*} 
Indeed, suppose a smooth flow
\begin{eqnarray*}
\eta=\big{(}u(x,y,z),v(x,y,z),t(x,y,z)\big{)}
\end{eqnarray*} 
  with a rational $2$-homogeneous vector field commutes with $\phi$, $\psi$, and $\xi$. Then  $\eta$ commutes with $\xi^{r}$ for any $r\in\mathbb{R}$ - vanishing of Lie brackets is a homogeneous condition on vector fields. Writing down, this means that
\begin{eqnarray*}
u\Big{(}x,y,\frac{z}{1-rz}\Big{)}=u(x,y,z),\text{ for any }r.
\end{eqnarray*}
So, $u$ is independent of $z$. Equally, $u$ is independent of $y$, and so $u(x,y,z)=\frac{x}{1-px}$ for a certain $p\in\mathbb{R}$. The same reasoning works for the functions $v$ and $t$.\\

 A bit more generally, a similar argument shows that (\ref{a-bit-more}) is a maximal collection. Indeed, for any  $3$-dimensional projective flow $\eta$ which commutes with $\xi$, the first two coordinates of $\eta$ must be independent of $z$, and therefore the first two coordinates of the vector field of $\eta$, based on the results of the current paper, are a linear combination of vector fields of $2$-dimensional flows  $(u,v)$ and $(a, b)$.

\subsection{Commutative rational flows}Another family of flows was given in \cite{alkauskas-t} in relation to continuous flows on a single point compactification of $\mathbb{R}^{n}$. With commutativity in mind, we can now shed a new light on this particular example.\\

 Indeed, let $Q(\m{x})$ be an arbitrary non-zero quadratic form in $n$ variables with real coefficients, and let $B(\m{x},\m{y})=Q(\m{x}+\m{y})-Q(\m{x})-Q(\m{y})$ be the associated bilinear form. Then we know that for any $\m{a}\in\mathbb{R}^{n}$, the $n$-dimensional rational function
\begin{eqnarray*}
\phi_{\m{a},Q}(\m{x})=\frac{\m{a}Q(\m{x})+\m{x}}{Q(\m{x})\cdot Q(\m{a})+B(\m{x},\m{a})+1}
\end{eqnarray*} 
(numerator is a vector, denominator is a scalar) is a projective flow with a vector field $\m{a}Q(\m{x})-\m{x}B(\m{x},\m{a})$ \cite{alkauskas-t}. Moreover (\cite{alkauskas-t}, Proposition 2),
\begin{eqnarray*}
\phi_{\m{a},Q}\circ\phi_{\m{b},Q}(\m{x})=\phi_{\m{a}+\m{b},Q}(\m{x}).
\end{eqnarray*}
So, for $Q$ fixed and $\m{a}$ varying, these flows mutually commute, and their composition produces a new projective flow, in correspondence with the results of the current paper. Since all possible vectors $\m{a}$ form a vector space of dimension $n$, there are exactly $n$ linearly independent vector fields in this family. \\

Further, let $J(\m{x})$ be a $1$-homogeneous rational function in dimension $n$. Then the flow
\begin{eqnarray*}
\phi_{J}(\m{x})=\frac{\m{x}}{1-J(\m{x})}
\end{eqnarray*} 
(once again, numerator is a vector, denominator is a scalar) is a direct analogue of level $0$ rational flows. Such a flow can be called \emph{level $0$ rational flow in dimension $n$}, and all such flows commute; see Proposition \ref{level-0}.\\
 
With all these examples in mind, we therefore strengthen the problem posed in the end of Subsection \ref{sub-comm} as follows. 
\begin{prob}Let $n\geq 3$ be an integer. Describe maximal sets of pairwise commuting smooth projective flows with rational vector fields in dimension $n$. Is it following true? If in this set there exists at least one flow not of the form $\phi_{J}(\m{x})$, and the set is $n$-dimensional, then this set is generated by $n$ linearly independent vector fields, and all flows in this set can be explicitly integrated in terms of algebraic functions.
\end{prob}
\section{Appendix - Maple codes for propositions 6 and 7}
\begin{verbatim}
> restart: Proposition 6
> alpha:=y^3/x: beta:=y^3/x: 
Vector field of the flow psi.
> a:=(x-y)*x*sqrt(1-2*y^2/x+2*y^3/x^2)/(x*sqrt(1-2*y^2/x+2*y^3/x^2)-y): 
  b:=(x*y-y^2)/(x*sqrt(1-2*y^2/x+2*y^3/x^2)-y):
The flow psi.
> simplify(diff(a,x)*(alpha-x)+diff(a,y)*(beta-y)+a): 
  simplify(diff(b,x)*(alpha-x)+diff(b,y)*(beta-y)+b):
Verification that both coordinates of psi satisfy the linear PDE.
> pi:=2*x^2-3*x*y: rho:=x*y-2*y^2: 
Vector field of the flow phi.
> u:=(x*y*sqrt(1-2*x+2*y)+x^2)/((y+x+2*y^2)*(1-2*x+2*y)):
  v:=(y^2*sqrt(1-2*x+2*y)+y*x)/((y+x+2*y^2)*sqrt(1-2*x+2*y)):
The flow phi.
> simplify(diff(u,x)*(pi-x)+diff(u,y)*(rho-y)+u):
  simplify(diff(v,x)*(pi-x)+diff(v,y)*(rho-y)+v):
Verification that both coordinates of phi satisfy the linear PDE. 
Initial conditions are easily verified by hand.
> simplify(diff(pi,x)*alpha+diff(pi,y)*beta-pi*diff(alpha,x)-rho*diff(alpha,y)): 
  simplify(diff(rho,x)*alpha+diff(rho,y)*beta-pi*diff(beta,x)-rho*diff(beta,y)):
Verification that flows comumute; that is, that Lie bracket vanish.
> A:=unapply(a,(x,y)):B:=unapply(b,(x,y)):
  U:=unapply(u,(x,y)):V:=unapply(v,(x,y)):
> simplify(A(U(x,y),V(x,y))-U(A(x,y),B(x,y))): 
  simplify(B(U(x,y),V(x,y))-V(A(x,y),B(x,y))):
Verification that flows commute.

> restart: Proposition 7
> alpha:=-x*y^3/(x-y)^2:beta:=(3*x*y^3-2*y^4)/(x-y)^2: 
Vector field of the flow psi.
> F:=(9*a*x-8*x^2-3*a*y)*x^2*y^2+(3*y+6*y^2-x)*a*(a*y-3*a*x+3*x^2)^2: 
Equation for the function "a".
> factor(F,I):
T:=-diff(F,x)/diff(F,a)*(alpha-x)-diff(F,y)/diff(F,a)*(beta-y)+a:
Verification that the first coordinate of psi satisify the linear PDE.
> factor(numer(T)/F):
Verification that the PDE holds modulo F.
> pi:=2*x^2+x*y:rho:=x*y+2*y^2:
> simplify(diff(pi,x)*alpha+diff(pi,y)*beta-pi*diff(alpha,x)-rho*diff(alpha,y)):
> simplify(diff(rho,x)*alpha+diff(rho,y)*beta-pi*diff(beta,x)-rho*diff(beta,y)):
Verification that flows comumute; that is, that Lie brackets vanish.
> r:=((y-3*x+6*x^2)/(x-3*y+6*y^2))^(1/2):
> u:=((x+y*r)^(1/3)+(x-y*r)^(1/3))*x*(x-y)
/(x-3*y+6*y^2)^(1/3)/(y-3*x+6*x^2)-2*x^2/(y-3*x+6*x^2):
> simplify(diff(u,x)*(pi-x)+diff(u,y)*(rho-y)+u):
Verification that the function u satisfies the PDE.
\end{verbatim}

\bibliographystyle{amsplain}

\end{document}